\def\UseSection{
        \numberwithin{equation}{section}
        \newtheorem{theorem}    {Theorem}[section]
        \DefineTheorems 
}
\newcommand{\black}{\black}
\numberwithin{equation}{section}
\newcommand{\bb}[1]{\mathbb{#1}}
\newcommand{\1}{\mathbbm{1}}
\newcommand{\floor}[1]{\lfloor #1 \rfloor}
\newcommand{\D}{\mathrm{d}}
\newcommand{\blank}[1]{}
\newcommand{\E}{\bb E}
\newcommand{\R}{\bb R}
\newcommand{\Z}{\bb Z}
\newcommand{\N}{\bb N}
\renewcommand{\P}{\bb P}
\DeclareMathOperator{\arccosh}{arccosh}
\DeclareMathOperator{\arcsinh}{arcsinh}
\newcommand{\nnb}	{\nonumber \\}
\def\DefineTheorems{
	\newtheorem{lemma}      [theorem] {Lemma}

	\newtheorem{prop}        [theorem] {Proposition}
	\newtheorem{defn}       [theorem] {Definition}
	\theoremstyle{definition}
	
}
\newcommand{\q}{q}
\newcommand{\scale}{s}
\newcommand{\xtop}{r}
 \title {
   Asymptotic behaviour of the lattice Green function
 }
 \author{
    Emmanuel Michta\thanks{Department of Mathematics,
     University of British Columbia,
     Vancouver, BC, Canada V6T 1Z2.
     Michta: \url{https://orcid.org/0000-0001-7222-0422}, {\tt michta@math.ubc.ca}.
     Slade:  \url{https://orcid.org/0000-0001-9389-9497}, {\tt slade@math.ubc.ca}.}
    \and
   Gordon Slade$^*$}
 \date{\vspace{-5ex}} 
\begin{document}
\maketitle

\begin{abstract}
The lattice Green function, i.e., the resolvent of the discrete Laplace
operator, is fundamental in probability theory and
mathematical physics.  We derive its long-distance behaviour
via a detailed analysis of 
an integral representation involving modified Bessel functions.
Our emphasis is on the
decay of the massive lattice Green function in the vicinity of the massless (critical)
case, and the recovery of Euclidean isotropy in the massless limit.
This provides a prototype for the expected but unproven long-distance behaviour of
near-critical two-point functions in statistical mechanical models such as
percolation, the Ising model, and the self-avoiding walk above their upper critical dimensions.
\end{abstract}

%

\begin{center}
    {\it Dedicated to the memory of Dmitry Ioffe, 1963--2020.}
\end{center}

\section{The lattice Green function and its decay}

\subsection{Introduction}
\label{sec:intro}

The \emph{lattice Green function}
is defined to be the Fourier integral
\begin{equation}
\label{e:Cax}
     C_{a}(x) =
     \int_{[-\pi,\pi]^d} \frac{e^{ik\cdot x}}{a^2 +1 - \hat D(k)} \frac{\D k}{(2\pi)^d}
\end{equation}
with $x \in \Z^d$, $a \ge 0$, and
\begin{equation}
    \hat D(k) = \frac 1d \sum_{j=1}^d \cos k_j  \qquad  (k = (k_1,\ldots,k_d) ).
\end{equation}
The integral \eqref{e:Cax}
converges for all $a >0$ in all dimensions $d \ge 1$, but converges for
$a=0$ only for the transient case
$d>2$ since the denominator of the integrand is quadratic in small $k$ when $a=0$.
The integral $C_0(0)$ in dimension $d=3$ is a Watson integral that has been evaluated
explicitly
in \cite{Wats39}; see \cite{Zuck11,Gutt10} for the interesting history and further
developments.

The lattice Green function derives its name from the fact that it is equal to the $0,x$ matrix element
of $(-\Delta+a^2)^{-1}$
(the inverse is as an operator on $\ell_2(\Z^d)$).
Here $\Delta$ is the Laplacian on $\Z^d$, i.e.,
\begin{equation}
\label{e:DeltaJ}
    \Delta = D- I
\end{equation}
where $D$ is the $\Z^d \times \Z^d$ matrix with $D_{xy}=\frac{1}{2d}$ if $\|x-y\|_1=1$,
and otherwise $D_{xy}=0$, and $I$ denotes the identity matrix.
Thus $C_a$ is the resolvent of the lattice Laplacian.
In the physics literature,
$C_a$ is often called the \emph{Euclidean lattice scalar propagator}.

Our purpose is to study the precise asymptotic behaviour of $C_a(x)$ as $x$ goes
to infinity, with emphasis on how this behaviour depends on values of $a$
close to the critical value $a =0$, and on how Euclidean invariance is restored in the
small $a$ limit.
Let $m_a$ denote the unique nonnegative solution to $\cosh m_a = 1+da^2$.
For $a>0$, an elementary proof that $m_a$ is the
exponential rate of decay of $C_{a}(x)$ when $x\to\infty$ along a coordinate axis,
and that
$C_{a}(x) \le C_{a}(0)e^{-m_a\|x\|_\infty}$ for all $x \in \Z^d$, is given in
\cite[Theorem~A.2]{MS93} (with a change of notation $2dz=\frac{1}{1+a^2}$).
On the other hand, for the critical case $a=0$ it is well-known
that there is instead polynomial
decay $\|x\|_2^{-(d-2)}$ \cite{LL10}.
We will prove, in a unified way and with precise constants for
the amplitudes in the asymptotic formulas, that there are the following four regimes of decay
for dimensions $d \ge 1$ (with the restriction $d>2$ for regime (IV)):
\begin{alignat*}{4}
    & {\rm (I)}  &\qquad \text{fixed }a >0
    & \qquad\text{anisotropic OZ}
    & \qquad  m_a^{(d-3)/2} |x|_a^{-(d-1)/2}e^{-m_a|x|_a}
    \\
    &{\rm (II)}
    &\qquad a\|x\|_2\to \infty, \; a^3\|x\|_2\to 0
    &\qquad  \text{isotropic OZ}
    &\qquad a^{(d-3)/2} \|x\|_2^{-(d-1)/2}e^{-\sqrt{2d} a\|x\|_2}
    \\
    &{\rm (III)}
    &\qquad \text{fixed }a\|x\|_2 >0
    &\qquad  \text{massive continuum}
    &\qquad \|x\|_2^{-(d-2)}
    \\
    &{\rm (IV)}
    &\qquad a = 0 \;\;(d>2)
    &\qquad  \text{massless continuum}
    &
    \qquad \|x\|_2^{-(d-2)}
    .
\end{alignat*}

The decay in regime (I) is called \emph{Ornstein--Zernike decay}.
The norm $|\cdot|_a$ is an explicitly defined
$a$-dependent anisotropic norm on $\R^d$ (not an $\ell^p$ norm).
Ornstein--Zernike decay is widely studied and has been proved for a variety
of subcritical statistical mechanical models, e.g., \cite{CIV08,CC86b,CCC91}.
These proofs for much
more difficult models than the lattice Green function
show decay of the form $|x|_a^{-(d-1)/2}e^{-m_a|x|_a}$,
but do not however reveal
the factor $m_a^{(d-3)/2}$ for small $a>0$.  Indeed, a solution to the latter
problem would be tantamount to a control of the critical behaviour of those models, a topic with
difficult unsolved problems of great current interest.

The decay in regime (II) is also Ornstein--Zernike decay, but the mass in regime (I)
is now replaced by its asymptotic form $\sqrt{2d} a$ as $a \to 0$, and Euclidean
invariance is restored since the norm
$|x|_a$ from regime (I) is replaced by the Euclidean norm.
This is natural: we will prove that $\lim_{a \to 0}|x|_a
=\|x\|_2$.

The decay in regimes (III) and (IV) is in fact expressed in terms of the
continuum Green function for the Laplacian on $\R^d$: the massive Green function
in regime (III) and massless Green function in regime (IV).
These continuum Green functions appear explicitly in the full asymptotic formulas
in these regimes.  In both cases,
the decay is Euclidean invariant and is expressed in terms of the $\ell_2$ norm.

The transition from regime (II) to (III) can be
anticipated
by replacing $a$ in (II)
by $\|x\|_2^{-1}$, which corresponds to $x$ on the order of the
\emph{correlation length} $m_a^{-1}$.
This replacement causes the asymptotic formula in (II) to transform
into the formula in (III).

More generally, for real numbers $\q\in (0,\infty)$ we consider the decay of
\begin{equation}
\label{e:Cxq}
    C^{(\q)}_{a}(x)=
    \int_{[-\pi,\pi]^d}\frac{e^{ik\cdot x}}{(a^2+1-\hat{D}(k))^\q}\frac{\D k}{(2\pi)^d}
\end{equation}
in the above four regimes.
When $\q \ge 2$ is a positive integer, $C^{(\q)}_{a}$ is the $\q$-fold convolution of
$C_a$ with itself.
For $\q=2,3,4$, $C_a^{(\q)}(0)$ is known respectively as the bubble, triangle and square
diagram.  These diagrams play an important role in the study of various statistical mechanical
models above their upper critical dimensions, especially when $a=0$; see, e.g., \cite{Slad06}.
For  integers $\q \ge 2$, $C^{(\q)}_{0}(x)$  is the critical lattice polyharmonic
Green function.  Polyharmonic functions have been widely studied, especially on $\R^d$
rather than on the lattice $\Z^d$ (e.g., \cite{ACL83}).

We note in passing
that the lattice Green function has the following probabilistic interpretation.
Let $X_1,X_2,\ldots$ be independent and identically distributed
random variables with each $X_i$
equally likely to be any one of the $2d$ unit vectors (positive or negative) in $\Z^d$,
for any fixed integer $d \ge 1$.
For $a \in [0,\infty)$,
let $N$ be a geometric random variable with
\begin{equation}
    \P(N=n)=
    \Big(\frac{1}{1+a^2}\Big)^{n}
    \frac{a^2}{1+a^2}  \qquad (n\ge 0),
\end{equation}
with
$N$ independent of the $X_i$.  Then $\P(N \ge n)=(\frac{1}{1+a^2})^n$.
 Let $S_0=0$, and
consider the nearest-neighbour
random walk $S_n=X_1+\cdots +X_n$ on $\Z^d$ subjected to $a$-dependent killing, i.e.,
the walk takes $N$ steps and then dies.
Let $p_n(x)$ denote the probability that the random walk without killing makes a transition
from $0$ to $x$ in $n$ steps.
The expected number
of visits of the random walk to a point $x\in \Z^d$ is
\begin{align}
\label{e:Gkappadef}
     \E \Bigg(\sum_{n=0}^N \1_{S_n  =x} \Bigg)
     =
     \sum_{n=0}^\infty \Big(\frac{1}{1+a^2}\Big)^{n} p_n(x)
    \qquad (x\in \Z^d).
\end{align}
The expectation in \eqref{e:Gkappadef}
is equal to $(1+a^2) C_a^{(1)}(x)$, as
a consequence of the fact that the Fourier transform
of $p_n(x)$ is simply
\begin{equation}
    \sum_{x\in \Z^d}p_n(x) e^{ik\cdot x} = \hat D(k)^n
    .
\end{equation}
We do not consider more
general random walks, which would correspond to operators
other than the Laplacian.
We expect that our results should extend to
the Green function
for random walks taking finite-range symmetric steps, but as can be seen in \cite{AIOV21}
the nature of the decay can change when arbitrarily long steps are permitted.

Our motivation to study the decay of the lattice
Green function originates from statistical mechanics.
The long-distance asymptotic behaviour of the two-point function is
an essential feature in the analysis of
critical phenomena in lattice statistical mechanical models such as percolation, the Ising
model, or the self-avoiding walk.
In high dimensions, $\|x\|_2^{-(d-2)}$ decay of the critical two-point function
has been proved in several cases, including \cite{HHS03,Hara08,BHH21,Saka07}.
However, the near-critical behaviour, which merges the subcritical
exponential decay and the power-law critical decay, has received scant
attention despite the fact that it has the potential
to reveal important and hitherto unstudied aspects of the critical behaviour,
particularly for models defined on a torus.  Recently progress has been made
in this direction for weakly self-avoiding walk for dimensions $d>4$
\cite{MS22,Slad20_wsaw} and percolation for $d>6$
\cite{HMS22}.
In high dimensions, where mean-field behaviour is known to occur,
the near-critical two-point function is conjectured to have similar decay to that of
the lattice Green function.
It is therefore important to have a detailed understanding of the
long-distance behaviour of the lattice Green function
as a prototype.  In this paper, we provide a comprehensive account of the decay
of the lattice Green function.

\subsection{The anisotropic norm}

Lattice effects play a significant role
in the asymptotic behaviour of $C_a^{(\q)}(x)$ when
$a>0$ is fixed, and lead to anisotropy in the decay.
The
following definition enters into the description of the anisotropy.

\begin{defn}
\label{def:norm}
Let $d \ge 1$ and $a \ge 0$.
We define the \emph{mass}, or \emph{inverse correlation length}, to be the unique solution
$m_a\ge 0$ of
\begin{equation}
\label{e:m0def}
    \cosh m_a = 1+ da^2 .
\end{equation}
For nonzero $x \in \R^d$, let $u=u_a(x)\ge 0$ be the unique solution of
	\begin{equation}
\label{e:udef1}
		\frac{1}{d}\sum_{i=1}^d\sqrt{1+x_i^2u^2} = 1+a^2,
	\end{equation}
which exists since the left-hand side of \eqref{e:udef1} is a strictly
increasing function of $u \in [0,\infty)$ onto $[1,\infty)$.
Finally, with the restriction now that
$a>0$, we define
$|0|_a=0$ and for nonzero $x\in\R^d$ define
	\begin{equation}
\label{e:normdef1}
		|x|_a = \frac{1}{m_a}\sum_{i=1}^d  x_i \arcsinh  (x_i u_a(x)).
	\end{equation}
\end{defn}

It follows from \eqref{e:m0def} and Taylor's theorem that, as $a \to 0$,
\begin{equation}
\label{e:maasy}
    m_a = \sqrt{2d}a(1+O(a^2)).
\end{equation}
Equation \eqref{e:normdef1} defines a norm on $\R^d$ whose properties are
indicated in Proposition~\ref{prop:norm}.  In particular, the norm $|\cdot|_a$ interpolates
between the $\ell_1$ norm when $a=\infty$ and the $\ell_2$ norm when $a=0$.
The norm's unit ball in dimensions $d=2,3$ is depicted in Figure~\ref{fig:norm}.

\begin{figure}[ht]
\centering{
\includegraphics[width=35mm, height=35mm]{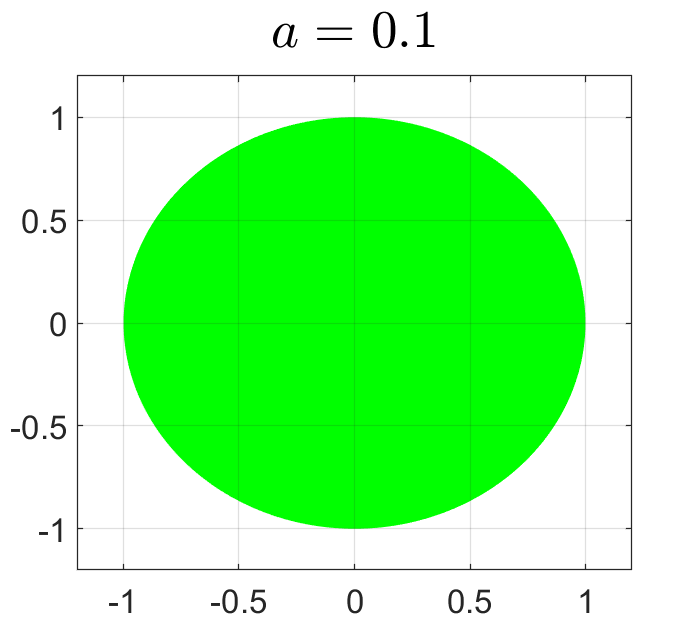}
\hspace{5mm}
\includegraphics[width=35mm, height=35mm]{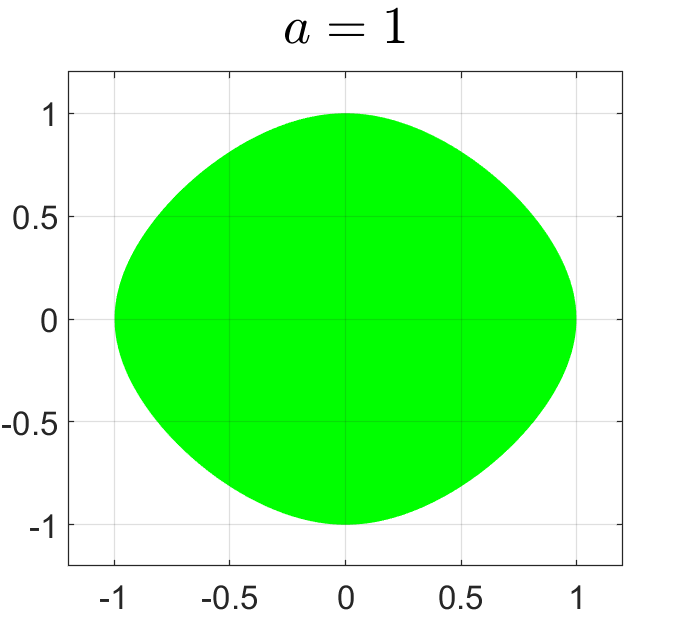}
\hspace{5mm}
\includegraphics[width=35mm, height=35mm]{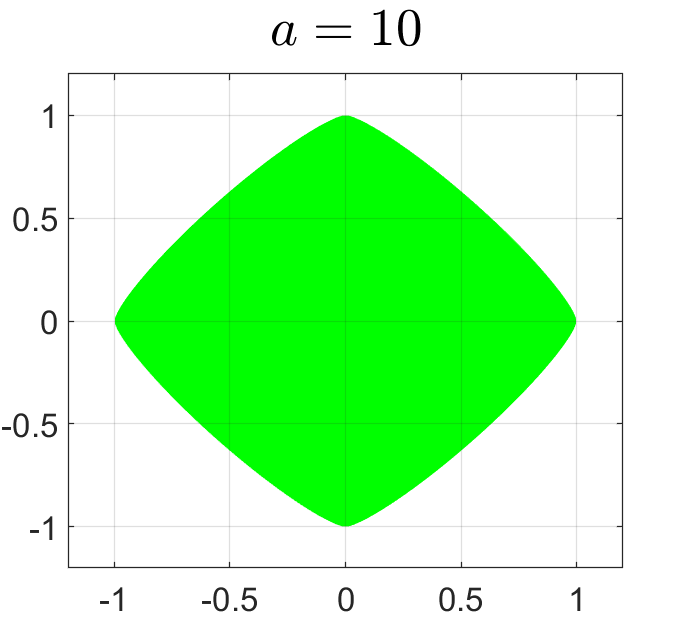}

\vspace{5mm}

\includegraphics[width=4cm, height=4cm]{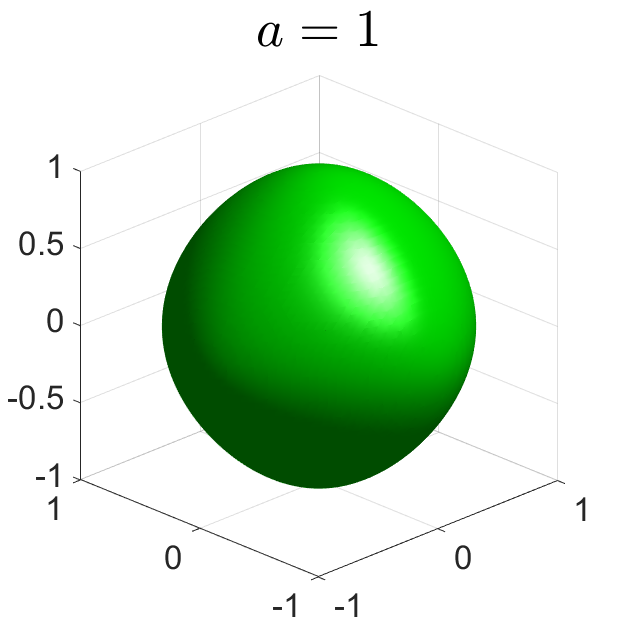}
\hspace{5mm}
\includegraphics[width=4cm, height=4cm]{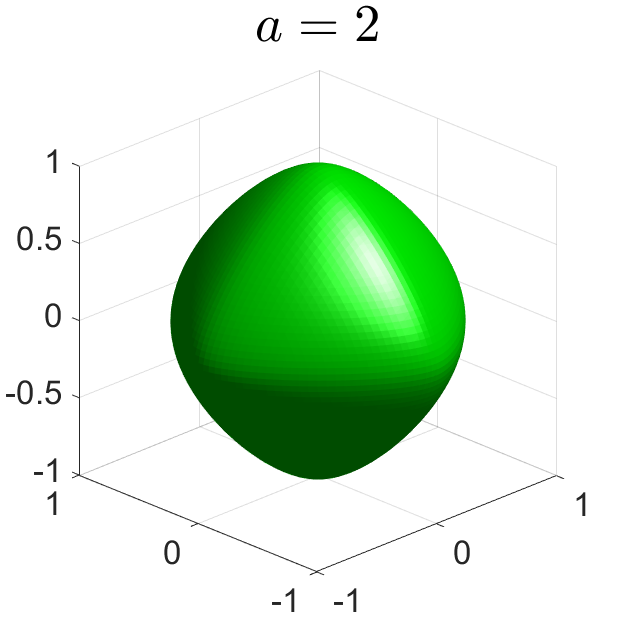}
\hspace{5mm}
\includegraphics[width=4cm, height=4cm]{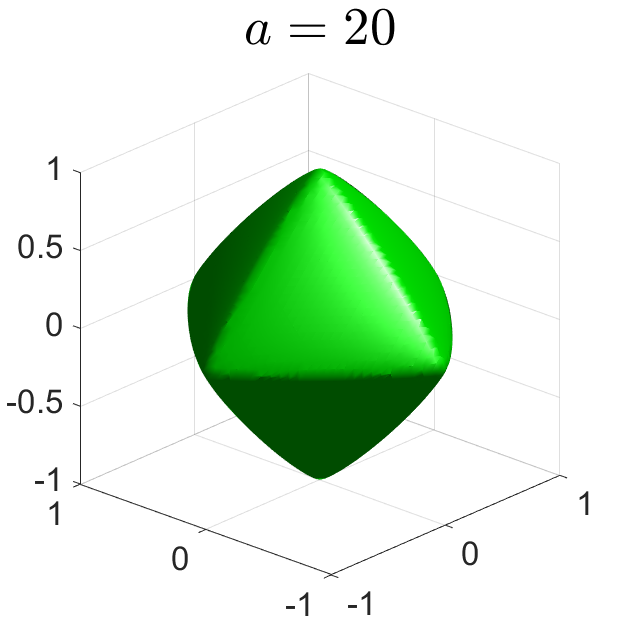}

\caption{Unit ball for the norm $|\cdot |_a$ in dimensions $d=2,3$.}
\label{fig:norm}}
\end{figure}

\begin{prop}
\label{prop:norm}
Let $d \ge 1$ and $a>0$.
The function $|\cdot|_a$ defines a norm on $\R^d$ which is monotone increasing
in $a$ and for all $x\in\R^d$ obeys
\begin{equation}
\lim_{a\to 0}|x|_a = \|x\|_2, \qquad \lim_{a\to \infty}|x|_a  = \|x\|_1,
\end{equation}
in fact $|x|_a = \|x\|_2(1+O(a^2))$ uniformly in $x \neq 0$.
In particular,
\begin{equation}
\label{e:normineq}
\|x\|_2 \le |x|_a \le \|x\|_1.
\end{equation}
\end{prop}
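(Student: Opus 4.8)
The plan is to recognize $|\cdot|_a$ as a rescaled support function, which makes the norm axioms automatic, and then to read off the remaining assertions from convexity estimates. Throughout one may assume $x_i\ge 0$, since both $|x|_a$ (by \eqref{e:normdef1}, as $\arcsinh$ is odd and $u_a$ depends on the $x_i^2$) and all auxiliary objects depend on $x$ only through $(|x_1|,\dots,|x_d|)$. First I would set $K_a=\{t\in\R^d:\tfrac1d\sum_{i=1}^d\cosh t_i\le 1+a^2\}$, which by \eqref{e:m0def} equals $\{t:\sum_{i=1}^d(\cosh t_i-1)\le \cosh m_a-1\}$; since $\cosh$ is strictly convex and $\tfrac1d\sum_i\cosh t_i\to\infty$ as $|t|\to\infty$, the set $K_a$ is a compact, centrally symmetric, strictly convex body with $0$ in its interior. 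The claim is that $|x|_a=\tfrac1{m_a}h_{K_a}(x)$, where $h_{K_a}(x)=\sup_{t\in K_a}t\cdot x$. Indeed, strict convexity makes the supremum attained at a unique $t^\star\in\partial K_a$, where the Lagrange condition forces $x$ parallel to $\tfrac1d(\sinh t_1^\star,\dots,\sinh t_d^\star)$; thus $\sinh t_i^\star=u\,x_i$ for some $u>0$, and the active constraint reads $\tfrac1d\sum_i\sqrt{1+u^2x_i^2}=1+a^2$, so $u=u_a(x)$ by \eqref{e:udef1}. Hence $h_{K_a}(x)=t^\star\cdot x=\sum_i x_i\arcsinh(x_iu_a(x))=m_a|x|_a$ by \eqref{e:normdef1}. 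Since the support function of a centrally symmetric convex body with nonempty interior is a norm, and $m_a>0$ for $a>0$, this proves $|\cdot|_a$ is a norm.

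Next the inequalities \eqref{e:normineq}. For the upper bound, every $t\in K_a$ has $\cosh t_i-1\le\cosh m_a-1$, hence $|t_i|\le m_a$, hence $t\cdot x\le m_a\|x\|_1$, so $|x|_a\le\|x\|_1$. For the lower bound I would use that $\phi\mapsto\cosh\sqrt\phi-1=\sum_{k\ge1}\phi^k/(2k)!$ is convex on $[0,\infty)$ and vanishes at $0$, hence superadditive; applied with $\phi_i=m_a^2x_i^2/\|x\|_2^2$ it gives $\sum_i(\cosh(m_ax_i/\|x\|_2)-1)\le\cosh m_a-1$, so $t:=m_ax/\|x\|_2\in K_a$, and $t\cdot x=m_a\|x\|_2$ yields $|x|_a\ge\|x\|_2$.

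For monotonicity in $a$ I would fix $x\neq 0$ with $x_i\ge 0$ and use $v=u_a(x)$ as parameter: $v$ increases from $0$ to $\infty$ with $a$, one has $m_a=\arccosh\big(\sum_i\sqrt{1+x_i^2v^2}-(d-1)\big)=:M(v)$, and $|x|_a=R(v)/M(v)$ with $R(v)=\sum_i x_i\arcsinh(x_iv)$. Using the identity $(\cosh M(v))'=vR'(v)$ one computes
\[
\frac{d}{dv}\,\frac{R(v)}{M(v)}=\frac{R'(v)\big(M(v)\sinh M(v)-vR(v)\big)}{M(v)^2\sinh M(v)},
\]
so monotonicity reduces to $m_a\sinh m_a\ge vR(v)$. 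Writing $\theta_i=\arcsinh(x_iv)\ge 0$ and $\phi_i=\cosh\theta_i-1$, one has $vR(v)=\sum_i\theta_i\sinh\theta_i=\sum_i G(\phi_i)$ and $m_a\sinh m_a=G(\cosh m_a-1)$, where $G(\phi)=\theta\sinh\theta$ with $\cosh\theta=1+\phi$; since $\sum_i\phi_i=\cosh m_a-1$, $G(0)=0$, and $G''(\phi)=(\tfrac12\sinh 2\theta-\theta)/\sinh^3\theta\ge 0$, the function $G$ is convex, hence superadditive, which gives the inequality. Thus $|x|_a$ is nondecreasing in $a$.

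Finally the limits and the uniform estimate, both via $|x|_a=h_{\widetilde K_a}(x)$ with $\widetilde K_a:=m_a^{-1}K_a$. As $a\to\infty$: $|t_i|\le m_a$ gives $\widetilde K_a\subseteq[-1,1]^d$, while for $\|s\|_\infty\le 1-\varepsilon$ one has $\sum_i(\cosh(m_as_i)-1)\le d\cosh(m_a(1-\varepsilon))\le\cosh m_a-1$ once $m_a$ is large (because $d\,e^{-\varepsilon m_a}\to 0$), so $[-(1-\varepsilon),1-\varepsilon]^d\subseteq\widetilde K_a$; hence $|x|_a=h_{\widetilde K_a}(x)\to\|x\|_1$. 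As $a\to 0$: from $\tfrac12 t_i^2\le\cosh t_i-1\le\tfrac12 t_i^2(1+O(\|t\|_\infty^2))$ for bounded $t$, together with the a priori bound $\|t\|_\infty=O(m_a)$ for $t\in K_a$ (coming from $\tfrac12\|t\|_2^2\le\cosh m_a-1$), one squeezes $\widetilde K_a$ between Euclidean balls of radii $1\pm O(m_a^2)$, so $|x|_a=h_{\widetilde K_a}(x)=\|x\|_2(1+O(m_a^2))$ uniformly in $x\neq 0$; since $m_a=\sqrt{2d}\,a(1+O(a^2))$ by \eqref{e:maasy}, this is $\|x\|_2(1+O(a^2))$, and in particular $\lim_{a\to 0}|x|_a=\|x\|_2$. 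I expect the main obstacle to be the variational identification in the first paragraph — making the Lagrange-multiplier step fully rigorous (uniqueness of $t^\star$ from strict convexity, non-vanishing of the constraint gradient on $\partial K_a$, positivity of the multiplier) and matching it precisely to \eqref{e:udef1} — with the convexity computation $G''\ge 0$ underlying monotonicity the only other genuinely non-routine point.
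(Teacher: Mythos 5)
Your proposal is correct, and it takes a genuinely different route from the paper's for every part of the proposition. The key new idea is the identification $m_a|x|_a=h_{K_a}(x)$, the support function of the convex body $K_a=\{t:\frac1d\sum_i\cosh t_i\le 1+a^2\}$. This step is sound and can even be made rigorous without Lagrange multipliers: with $t^\star_i=\arcsinh(x_iu_a(x))$ one has $t^\star\in\partial K_a$, and the convexity inequality $\cosh t_i\ge\cosh t^\star_i+\sinh(t^\star_i)(t_i-t^\star_i)=\cosh t^\star_i+u_a(x)x_i(t_i-t^\star_i)$ summed over $i$ gives $t\cdot x\le t^\star\cdot x$ for every $t\in K_a$. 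Subadditivity of support functions then yields the triangle inequality directly from the definition --- precisely the kind of proof the paper says it does not know of; the paper instead derives the triangle inequality from the random-walk correlation inequality $C_a(0)C_a(x)\ge C_a(y)C_a(x-y)$ combined with the Ornstein--Zernike asymptotics of Theorem~\ref{thm:green-asy}(i), so your argument is self-contained where the paper's depends on the main theorem. For monotonicity, both arguments reduce to the same inequality $u_a(x)|x|_a\le\sinh m_a$: the paper proves it by a critical-point analysis locating the maximum of $x\mapsto u_a(x)|x|_a$ at the unit vectors, whereas you get it from superadditivity of the convex function $G(\phi)=\theta\sinh\theta$ with $\cosh\theta=1+\phi$ (your formulas for $G''$ and the identity $(\cosh M(v))'=vR'(v)$ both check out). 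Finally, the inclusions $B(0,m_a(1-O(m_a^2)))\subseteq K_a\subseteq B(0,m_a(1+O(m_a^2)))$ and $[-(1-\varepsilon),1-\varepsilon]^d\subseteq m_a^{-1}K_a\subseteq[-1,1]^d$ deliver the two limits, the uniform estimate $|x|_a=\|x\|_2(1+O(a^2))$, and \eqref{e:normineq} for free, where the paper computes the asymptotics of $u_a(x)$ and $m_a$ directly. Each approach buys something: the paper's random-walk mechanism mirrors the Griffiths-inequality proofs available for genuine statistical-mechanical models, while yours is shorter, elementary, independent of Theorem~\ref{thm:green-asy}, and gives a geometric explanation of why $|\cdot|_a$ interpolates between the $\ell^2$ and $\ell^1$ balls.
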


By definition, $m_a(0)=0$ and
$m_a$ is a strictly positive strictly increasing function of $a>0$.
To understand why the factor $m_a^{-1}$ appears on the right-hand side
of \eqref{e:normdef1}, we note that for any $a>0$ and for any unit vector $e_j\in\R^d$,
\begin{align}
\label{e:ue}
    u_a(e_j) &= \sqrt{(1+da^2)^2-1} = \sqrt{\cosh^2 m_a -1} = \sinh m_a,
\end{align}
and hence, for all $a > 0$,
\begin{equation}
\label{e:norm1}
    |e_j|_a = 1.
\end{equation}

The $|\cdot |_a$ norm originated in the analysis of the 2-dimensional Ising model
\cite[pp.~302--303]{MW73},
although it was not identified there
as a norm.  A proof that it defines a norm was given in \cite[Lemma~6.5]{Pfis99};
there the proof of the triangle inequality was based on the second Griffiths inequality
applied to the 2-dimensional Ising model.
We provide a simple alternate proof based on a random walk argument.
We do not know of any direct proof of the triangle inequality
based on the definition of the norm.
Neither are we aware of any prior proof of the monotonicity of the norm.

\subsection{The continuum Green function}

In Appendix~\ref{sec:continuumGreen} we consider and interpret the
integral
\begin{equation}
\label{e:Gx}
    G_\scale^{(\q)}(x)
    = \int_{\R^d} \frac{e^{ik\cdot x}}{(\frac{1}{2d}\|k\|_2^2+\scale^2)^{\q}} \frac{\D k}{(2\pi)^d}
    \qquad
    (x \in \R^d \setminus \{0\}),
\end{equation}
which in the case $\q=1$ is the Green function
for the (normalised) continuum Laplace operator on $\R^d$.
It follows from \eqref{e:Gx} that there is a scaling relation
\begin{equation}
\label{e:Gscaling}
    G_\scale^{(q)}(x) = \scale^{d-2\q} G_1^{(q)}(\scale x)
    \qquad
    (\scale>0).
\end{equation}
In Appendix~\ref{sec:continuumGreen}, we
recall the elementary proof that
for $\scale >0$, integers $d \ge 1$, and nonzero $\q\in \R$, the massive and massless
continuum Green functions are given explicitly
(in the sense of tempered distributions) by \eqref{e:Gscaling} together with
\begin{alignat}{2}
\label{e:Gaqx-2d}
    G_1^{(\q)}(x)
    & =
    \frac{2d^\q}{\Gamma(q)(2\pi)^{d/2}}
    \Bigg( \frac{\sqrt{2d} }{\|x\|_2} \Bigg)^{(d-2\q)/2}
    \!\!\!\!\!
    K_{(d-2\q)/2}(\sqrt{2d} \|x\|_2)
    ,
    \\
\label{e:G0qx}
    G_0^{(\q)}(x)
    & =
    \frac{d^\q \Gamma(\frac{d-2\q}{2})}{2^{\q}\pi^{d/2}\Gamma(\q)}
    \frac{1}{\|x\|_2^{d-2\q}},
\end{alignat}
where $K_\alpha$ is the modified Bessel
function of the second kind
and for \eqref{e:G0qx} we restrict to $d> 2\q$.
For $\alpha >0$ the asymptotic behaviour of $K_\alpha$ is known to be
\begin{align}
\label{e:Kasy}
    K_\alpha(z) &\sim \frac{\Gamma(\alpha)}{2} \Big(\frac{2}{z}\Big)^\alpha
    \quad (z \to 0),
    \qquad
    K_\alpha(z) \sim \sqrt{\frac{\pi}{2z}} e^{-z}
    \quad (z \to \infty).
\end{align}

\subsection{Asymptotic behaviour}
\label{sec:ab}

\subsubsection{Main result}

The following theorem gives a precise statement of the asymptotic decay of the lattice
Green function for arbitrary dimension $d \ge 1$ and for $a\ge 0$ (possibly $n$-dependent).
The norm $|\cdot|_a$ plays a key role in the anisotropic limit \eqref{e:mr}, for
which lattice effects persist when $a$ is independent of $n$.
Recall that $m_a=\arccosh (1+da^2)$ for $a \ge 0$.
We write $f(n) \sim g(n)$ to mean $\lim_{n\to\infty}f(n)/g(n)=1$.

\begin{theorem}
\label{thm:green-asy}
Let $d \ge 1$ and
$q\in (0,\infty)$ (not necessarily integer).  Fix $x \in \Z^d \setminus \{0\}$.
\\
{\bf (i) (Ornstein--Zernike decay).}
Let $a_n\in (0,\infty)$ and suppose that $a_nn \to \infty$ in such a manner that
$a_n$ remains bounded
(this includes in particular the case of fixed $a_n=a\in (0,\infty)$).
There exists $c_{a,\q,\hat x}>0$ depending on $a$, $\q$, and the direction
$\hat x = x/|x|_a$ (and on the dimension $d$), such that, as $n \to \infty$,
\begin{equation}
\label{e:mr}
    C_{a_n}^{(\q)}(nx)
    \sim
    c_{a_n,\q,\hat x}
    \frac{m_{a_n}^{(d-1-2\q)/2}}{(n|x|_{a_n})^{(d+1-2\q)/2} }
    \;
    e^{-m_{a_n}n|x|_{a_n}} .
\end{equation}
The ratio of the above left- and right-hand sides
converges to $1$ uniformly in nonzero $x$, and
the constant $c_{a,\q,\hat x}$ has the explicit $\hat x$-independent limit
 \begin{equation}
 \label{e:calim}
    c_{0,\q}
    =
    \lim_{a \to 0}c_{a,\q,\hat x}
    =
    \frac{d^q}{(2\pi)^{(d-1)/2}\Gamma(q)}.
\end{equation}
{\bf (ii) (Critical decay).}
Let
$a_n = \scale /n$ with $\scale \in [0,\infty)$, with $d>2\q$ if $\scale=0$.
Then, as $n \to \infty$,
\begin{equation}
\label{e:mr-critical}
    C_{a_n}^{(\q)}(nx)
    \sim
    \frac{1}{n^{d-2\q}} G_{\scale}^{(\q)}(x)
    .
\end{equation}
\end{theorem}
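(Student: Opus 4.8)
The plan is to base everything on a single integral representation and then run two separate real‑variable asymptotic analyses. First I would use the identity $\lambda^{-q}=\Gamma(q)^{-1}\int_0^\infty t^{q-1}e^{-\lambda t}\,\D t$ with $\lambda=a^2+1-\hat D(k)$, Fubini's theorem (legitimate since $\int_{[-\pi,\pi]^d}(a^2+1-\hat D(k))^{-q}\D k<\infty$ for every $a>0$, and for $a=0$ precisely when $d>2q$), and the classical formula $\int_{-\pi}^{\pi}e^{i\ell\theta+z\cos\theta}\frac{\D\theta}{2\pi}=I_\ell(z)$ for integer $\ell$ (with $I_{-\ell}=I_\ell$), to obtain
\[
    C_a^{(q)}(x)=\frac{1}{\Gamma(q)}\int_0^\infty t^{q-1}\,e^{-(1+a^2)t}\prod_{j=1}^d I_{x_j}(t/d)\,\D t,
\]
where $I_\nu$ is the modified Bessel function of the first kind. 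Since $\prod_{j} I_{x_j}(t/d)\sim (d/2\pi t)^{d/2}e^{t}$ as $t\to\infty$, the integrand there is $O(t^{q-1-d/2}e^{-a^2t})$, so the representation converges for all $a>0$, and for $a=0$ exactly when $d>2q$. The integrand is real and positive, so for the asymptotics one is always dealing with a genuine Laplace‑type integral on $(0,\infty)$; this representation is the sole analytic input for both parts.

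For part~(i), replace $x$ by $nx$, substitute $t=n\tau$, and insert Olver's uniform large‑order expansion $I_\nu(\nu z)=\frac{e^{\nu\eta(z)}}{\sqrt{2\pi\nu}\,(1+z^2)^{1/4}}(1+O(1/\nu))$, valid uniformly in $z\in(0,\infty)$, with $\eta(z)=\sqrt{1+z^2}+\ln\frac{z}{1+\sqrt{1+z^2}}$, applied to each $I_{n|x_j|}(n\tau/d)$ at $z_j=\tau/(d|x_j|)$ (coordinates with $x_j=0$ contribute $I_0(n\tau/d)$, handled by the $z_j\to\infty$ limit of these formulas, in which $|x_j|\eta(\tau/(d|x_j|))\to\tau/d$). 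This turns the integral into $\frac{n^q}{\Gamma(q)}\int_0^\infty \tau^{q-1}A(\tau)\,e^{n\varphi_{a_n,x}(\tau)}\,\D\tau$ with bounded smooth amplitude $A(\tau)=(2\pi n)^{-d/2}\prod_j|x_j|^{-1/2}(1+z_j^2)^{-1/4}$ and phase $\varphi_{a,x}(\tau)=-(1+a^2)\tau+\sum_j|x_j|\,\eta(\tau/(d|x_j|))$. Using $\eta'(z)=\sqrt{1+z^2}/z$ one checks that $\varphi_{a,x}$ is strictly concave with unique maximiser $\tau_\ast$ solving $\frac1d\sum_j\sqrt{1+x_j^2(d/\tau_\ast)^2}=1+a^2$, i.e.\ $\tau_\ast=d/u_a(x)$ with $u_a$ as in \eqref{e:udef1}; substituting \eqref{e:udef1} and \eqref{e:normdef1} then collapses the exponent to the clean identity $\varphi_{a,x}(\tau_\ast)=-m_a|x|_a$. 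A Laplace/Watson‑lemma expansion about $\tau_\ast$ now yields \eqref{e:mr} with $c_{a,q,\hat x}$ the explicit product of $\tau_\ast^{q-1}$, $|\varphi_{a,x}''(\tau_\ast)|^{-1/2}$, $\prod_j|x_j|^{-1/2}(1+z_{j,\ast}^2)^{-1/4}$ and compensating powers of $m_a$ and $|x|_a$; a short homogeneity check ($|nx|_a=n|x|_a$, and $z_{j,\ast}$ and $c_{a,q,\hat x}$ invariant under $x\mapsto\lambda x$) shows the constant depends on $x$ only through $\hat x$, and letting $a\to0$ in each ingredient (using $m_a=\sqrt{2d}\,a(1+O(a^2))$, $|x|_a\to\|x\|_2$, and $u_a(x)\to0$ so $\tau_\ast\to\infty$) produces \eqref{e:calim}.

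For part~(ii), start from the same representation with $x\mapsto nx$ and $a_n=s/n$, but substitute $t=n^2\sigma$ --- the scale at which $I_{n|x_j|}(t/d)$ is not exponentially small. Here the Bessel order is of lower order than the argument, so the $z\to\infty$ form of the uniform expansion applies: $\eta(z)=z-\tfrac{1}{2z}+O(z^{-3})$ and $(1+z^2)^{1/4}\sim z^{1/2}$ give $I_{n|x_j|}(n^2\sigma/d)=\frac{\sqrt d}{n\sqrt{2\pi\sigma}}\,e^{\,n^2\sigma/d-dx_j^2/(2\sigma)}(1+O(1/n))$. The product of the $d$ factors contributes $e^{n^2\sigma}$, which cancels exactly against $e^{-(1+s^2/n^2)t}=e^{-n^2\sigma}e^{-s^2\sigma}$, and after collecting powers of $n$ one obtains
\[
    C_{s/n}^{(q)}(nx)=(1+o(1))\,\frac{n^{2q-d}\,d^{d/2}}{\Gamma(q)(2\pi)^{d/2}}\int_0^\infty \sigma^{q-1-d/2}\,e^{-s^2\sigma-d\|x\|_2^2/(2\sigma)}\,\D\sigma.
\]
The last integral is exactly the heat‑kernel representation of $G_s^{(q)}(x)$ obtained by applying the Gamma identity and the Gaussian Fourier integral to \eqref{e:Gx}, which gives \eqref{e:mr-critical}; when $s=0$ it converges at $\sigma=\infty$ precisely when $d>2q$, matching the hypothesis. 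Dominated convergence is routine here because the integrand is killed at $\sigma\to0$ by $e^{-d\|x\|_2^2/(2\sigma)}$ and at $\sigma\to\infty$ by $e^{-s^2\sigma}$ (or by $\sigma^{q-1-d/2}$ with $d>2q$ when $s=0$).

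The main work is not the formal saddle‑point identities but uniformity and error bookkeeping, and I expect three points to be the real obstacle. (a) Directions $\hat x$ with some $|x_j|$ small relative to $\tau$ (in the limit $x_j=0$) drive $z_j\to\infty$, so Olver's expansion, \emph{including its error bound}, must be used uniformly all the way to $z=\infty$, with $A(\tau)$ and the curvature $\varphi_{a,x}''(\tau_\ast)$ kept under control, so the $O(1/n)$ relative errors integrate to strictly lower order uniformly in $\hat x$. (b) As $a_n\to0$ with $a_n n\to\infty$ in part~(i), the saddle $\tau_\ast=d/u_{a_n}(x)\sim c/a_n$ runs to infinity while $|\varphi_{a_n,x}''(\tau_\ast)|\asymp a_n^3$ and $|\varphi_{a_n,x}'''(\tau_\ast)|\asymp a_n^4$ degenerate together; one must check that the genuine expansion parameter for the Laplace step is $(a_nn)^{-1/2}\asymp (m_{a_n}n|x|_{a_n})^{-1/2}\to0$ (the saddle width $(na_n^3)^{-1/2}$ is small relative to $\tau_\ast$, and the cubic correction $n|\varphi'''|(\text{width})^3\asymp (a_nn)^{-1/2}$), so that \eqref{e:mr} holds throughout the stated range and matches smoothly onto part~(ii). (c) The contributions to the $t$‑integral away from $t_\ast=n\tau_\ast$ (respectively the $\sigma$‑integral away from its maximiser) must be shown to be exponentially smaller, uniformly --- immediate from strict concavity of the phase, but needing a separate small‑$t$ estimate when several coordinates of $x$ vanish and a separate large‑$t$ estimate when $a=0$. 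Of these, (a) and (b) --- the uniformity of the Bessel expansion at both of its boundary regimes, together with the joint degeneration of the saddle as the Ornstein--Zernike regime merges into the critical one --- are the genuinely delicate part.
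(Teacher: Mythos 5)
Your proposal follows essentially the same route as the paper: the same Bessel-integral representation, Olver's uniform large-order asymptotics plus a hands-on Laplace analysis at the saddle $\tau_\ast=d/u_a(x)$ (with the exponent collapsing to $-m_a|x|_a$) for part~(i), and the $t=n^2\sigma$ rescaling to the heat kernel followed by dominated convergence for part~(ii). The delicate points you flag --- uniformity of the Bessel expansion as $z\to\infty$ and the degeneration of the saddle with effective expansion parameter $(a_nn)^{-1/2}$ --- are precisely the ones the paper handles (via the window $\varepsilon_n=(a_nn)^{-1/4}$ and explicit tail estimates), so the plan is correct as stated.
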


The asymptotic formula \eqref{e:mr} encompasses both regimes (I) and (II)
mentioned in Section~\ref{sec:intro}.
The anisotropic OZ regime (I) is the case of fixed $a_n=a>0$, for which the
anisotropic norm $|x|_a$ plays a role.

For the isotropic OZ regime (II), we
are interested in the case where $a_n\to 0$ in such a way that $a_nn\to\infty$
and $a_n^3n\to 0$.
Recall from \eqref{e:maasy}
and Proposition~\ref{prop:norm} that $m_a=\sqrt{2d}a(1+O(a^2))$ and
$|x|_a=\|x\|_2(1+O(a^2))$.  Consequently, as $a_n \to 0$ we have
\begin{equation}
    m_{a_n} n |x|_{a_n} = \sqrt{2d} a_n n \|x\|_2(1+O(a_n^2))
\end{equation}
and hence it follows from \eqref{e:mr} that if $a_nn \to\infty$ then
\begin{equation}
\label{e:mr-OZ-error}
 C_{a_n}^{(\q)}(nx)
    \sim
        c_{0,\q}
    \frac{(\sqrt{2d}a_n)^{(d-1-2\q)/2}}{(n\|x\|_{2})^{(d+1-2\q)/2}}
    e^{-\sqrt{2d} a_nn \, \|x\|_2[1+O(a_n^2)]}.
\end{equation}
If we now assume additionally that $a_n^3n\to 0$ then the error term in the
exponential can be neglected and we obtain the result claimed for regime (II), namely
\begin{equation}
\label{e:mr-OZ-a}
 C_{a_n}^{(\q)}(nx)
    \sim
        c_{0,\q}
    \frac{(\sqrt{2d}a_n)^{(d-1-2\q)/2}}{(n\|x\|_{2})^{(d+1-2\q)/2}}    e^{-\sqrt{2d} a_nn \, \|x\|_2}.	
\end{equation}
If the condition $a_n^3n\to 0$ is violated then we see from \eqref{e:mr-OZ-error} that
modifications to the exponential decay will occur from the error term in the exponent.

The massive critical regime (III) and the massless critical regime (IV)
are respectively the $\scale >0$ and $\scale =0$ cases of \eqref{e:mr-critical}.
There is coherence between regimes (II)
and (III) in the sense that if $a_n$ in (II) is replaced by $\scale/n$ then the exponential
factor becomes a constant and the powers $a_n^{(d-1-2\q)/2} n^{-(d+1-2\q)/2}$ reduce
to an $\scale$-dependent multiple of $n^{-(d-2\q)}$.  The continuum Green function
$G_\scale^{(q)}(x)$ provides the amplitude for the asymptotic decay in the critical regimes.
There is no statement of uniformity in $x$ in \eqref{e:mr-critical} because uniformity is
impossible for $\scale>0$:
e.g., if $x=n^2y$ with $y$ independent of $n$ then $\frac{\scale}{n} x
=\scale ny \to \infty$ as $n \to \infty$ and we are actually in regime (I), not regime (III).

\subsubsection{Previous results}

The proof of Theorem~\ref{thm:green-asy} is based on the representation
\begin{align}
\label{e:BesselC}
    C^{(\q)}_{a}(x)
				 &=
 \frac{1}{\Gamma(\q)}\int_0^\infty t^{\q-1}e^{-(a^2+1)t}	
    \prod_{j=1}^{d}I_{x_j}(t/d) \D t
\end{align}
in terms of the modified Bessel function of the first kind.
Much of Theorem~\ref{thm:green-asy} has been proved previously by other authors,
and we now describe what was done previously and how our approach simplifies, extends and
unifies earlier work.

For $\q=1$ and for fixed $a_n=a>0$, the asymptotic formula \eqref{e:mr} is proved
in \cite[Theorem~3.2]{MY12} for $d \ge 1$, and for $d=2$ in
\cite[Proposition~13]{Mess06}.
Neither of those references identified the role of the anisotropic norm in
\eqref{e:mr}, and the norm
makes the statement significantly more transparent.
In \cite[Theorem~3.3]{MY12},
\eqref{e:mr-OZ-a} is stated to hold in
the limit in which $a_n \to 0$ with $a_nn\to \infty$; in fact
the further restriction $a_n^3n\to 0$ is necessary for the
simplification of the exponential in \eqref{e:mr} to yield the isotropic form \eqref{e:mr-OZ-a}.
Our method of proof is based on the Laplace method as in
\cite{MY12} but it is simplified by our appeal to well-established properties of the modified
Bessel function rather than deriving them as part of the proof as in \cite{MY12}.
Also, unlike the separate proofs for the anisotropic and isotropic cases in \cite{MY12},
we give one unified proof.

The massive critical regime was considered in \cite{PS99}
(indeed these authors computed higher-order terms as well), but the arguments used in \cite{PS99} do not constitute a proof.
The formula \eqref{e:mr-critical} for $\q=1$ and
$\scale > 0$ can be inferred from
the statement of \cite[Proposition~3.1]{DGGZ22}, which is proved via
the local central limit theorem.
Our proof, which again uses known properties of the modified Bessel function,
involves a straightforward application of the dominated convergence theorem and does
not involve the Laplace method.

For the massless critical regime (IV),
the asymptotic behaviour of the
critical lattice polyharmonic Green function is given in \cite{Mang67} as
\begin{equation}
\label{e:Cq0}
    C^{(\q)}_{0}(x) \sim
    \frac{d^q\Gamma(\frac{d-2\q}{2})}{2^{\q}\pi^{d/2}\Gamma(\q)} \frac{1}{\|x\|_2^{d-2\q}}
    \qquad
    (\q=1,2,3,\ldots; \; d>2\q)
\end{equation}
with explicit higher-order correction term.  Since higher-order terms
are known we make no effort
here to compute them, as our focus in the proof is on simplicity.  We prove \eqref{e:Cq0}
as the $\scale=0$ case of \eqref{e:mr-critical} for arbitrary
real $\q\in (0,\infty)$ when $d>2\q$.  This special case of
our proof of \eqref{e:mr-critical} in the entire critical regime $\scale \ge 0$
does not require separate attention.
When $\q=1$, \eqref{e:Cq0}
gives the well-known decay of
the critical lattice Green function.
In fact, the $\|x\|_2^{-(d-2)}$ decay in  \eqref{e:Cq0} for $\q=1$
holds more generally
under a second-moment condition for $D_{xy}$ (recall \eqref{e:DeltaJ}), with error term
of order $\|x\|_2^{-d}$ with known coefficient.  For $\q=1$ see, e.g.,
\cite[p.~82]{LL10} or \cite[Theorem~3.4]{MY12}, or \cite[p.~308]{Spit76} for $d=3$, and for
further error terms see
\cite{Uchi98}.  A version of \eqref{e:Cq0} for $\q=1$ holds under certain conditions even when
the transition matrix $D_{xy}$ is permitted to assume negative values  \cite{Hara08}.

\subsubsection{Explicit calculation for $d=1$}

For $d=1$ and integers $\q\ge 1$, the
condition $d=1>2\q$ is violated and $C_0^{(\q)}=\infty$, so regime (IV) does not apply.
The computation of $C_a^{(q)}(x)$ for $d=1$, integer $\q\ge 1$, and $a>0$ can be
done explicitly
with the result that
\begin{align}
    \int_{-\pi}^\pi \frac{e^{ikx}}{(1+a^2-\cos k)^\q} \frac{dk}{2\pi}
    & =
    \frac{e^{-m_a|x|}}{\sinh^\q m_a}
    \sum_{l=0}^{q-1}
     \binom{|x|+q-1}{q-1-l} \binom{q-1+l}{l}
     \Big( \frac{e^{-m_a}}  {2\sinh m_a}
       \Big)^l
\label{e:d1generalq}
\end{align}
with $m_a = \arccosh (1+a^2)$.
The above formula can be verified by residue calculus or by
an appropriate rewriting of the formula \cite[(3.616.7)]{GR07}.
In detail, the cases $\q=1$ and $\q=2$ are
\begin{align}
\label{e:d1q1}
    C_a^{(1)}(x)
    &=
    \frac{e^{-m_a|x|}}{\sinh m_a}
    \qquad\qquad\qquad\qquad\qquad\qquad\qquad\;\;
    (d=1),
\\
\label{e:d1q2}
    C_a^{(2)}(x)
    &=
    \frac{|x| e^{-m_a|x|}}{\sinh^2 m_a}
    \Bigg[ 1 + \frac{1}{|x|} \Big(1 +  \frac{e^{-m_a}}  {\sinh m_a} \Big) \Bigg]
    \qquad\quad
    (d=1)   .
\end{align}
Both of the formulas \eqref{e:d1q1}--\eqref{e:d1q2} refine and are consistent with
\eqref{e:mr} and \eqref{e:mr-critical} from Theorem~\ref{thm:green-asy}.
In particular, for $a=\scale/n$ with fixed $\scale>0$, \eqref{e:d1q1} gives
\begin{equation}
    C_{\scale/n}^{(1)}(nx) \sim \frac{n}{\sqrt{2}\scale}e^{-\sqrt{2} \scale |x|},
\end{equation}
and since $K_{-1/2}(y) = K_{1/2}(y) = \pi^{1/2}(2y)^{-1/2}e^{-y}$ for $y >0$ (see \cite[8.432.8]{GR07})
this agrees with \eqref{e:mr-critical}.

\subsubsection{Ornstein--Zernike vs critical decay}

In the physics literature, the inverse mass $\xi_a=m_a^{-1}$ is known as the
\emph{correlation length}.
With $a=\scale/n$ and $\scale >0$,
Theorem~\ref{thm:green-asy} can then be interpreted informally as identifying the
following decay of the lattice Green function:
\begin{alignat}{3}
    &\scale >0   &\qquad  n\|x\|_2 \asymp \xi_a
    &\qquad  \text{massive continuum limit},
    \\
    & \scale \to \infty &\qquad  n\|x\|_2 \gg \xi_a
    &\qquad  \text{Ornstein--Zernike decay}.
\end{alignat}
For the latter case, we see the Euclidean invariance if $\scale=o(n)$ but not for
$\scale = an$
with fixed $a>0$.

The Ornstein--Zernike and critical regimes occur in general dimensions
in lattice statistical mechanical
models such as the self-avoiding walk, percolation, and the Ising model
\cite{CIV08,CC86b,CCC91}.
This perspective is standard in the physics literature but a mathematical description
of the near-critical behaviour which crosses over between the two regimes
is lacking in most examples, even in high dimensions where the lace expansion applies.
The asymptotic formula \eqref{e:mr} provides a prototype for what can be expected
for the near-critical two-point functions of
the high-dimensional statistical mechanical models.

The bounds in regimes (I)--(II) in general do \emph{not} hold uniformly in all
$a > 0$, $n \ge 1$, and nonzero $x\in\Z^d$.  This is evident in the
explicit formula \eqref{e:d1q2} for $d=1$ and $q=2$,
where
the first term $|x| \frac{e^{-m_a|x|}}{\sinh^2 m_a}$
dominates when $x \to \infty$ with fixed $a$, in agreement with \eqref{e:mr},
whereas with fixed $x$, in the limit $a\to 0$
we have $m_a\sim \sqrt{2}a \to 0$, the exponentials become insignificant, and \eqref{e:d1q2}
is dominated by the factor $\sinh^{-3}m_a \sim (\sqrt{2}a)^{-3}$
arising from  its last term.
This shows the impossibility for this case
of an upper bound of the form $|x|a^{-2} e^{-m_a|x|}$
that is uniform in both $x$ and $a$.

Similarly,  for $d>3$ and $\q=1$, there can be no upper bound
on $C_a^{(1)}$ of the form
\begin{equation}
\label{e:OZnotuniform}
    m_a^{(d-3)/2}\frac{1}{|x|_a^{(d-1)/2}}e^{-m_a|x|_a}
\end{equation}
that is uniform in all $a>0$ and nonzero $x \in \Z^d$,
because \eqref{e:OZnotuniform} vanishes as $ a\to 0$ with fixed $x$
due to the factor $m_a^{(d-3)/2}$, whereas if $|x|_a$ grows like $m_a^{-1}$ then
$C_a^{(1)}$ is in regime (III) and decays as a multiple of $\|x\|_2^{-(d-2)}$.

It remains an open problem to determine for which values of $d,\q$ the formula
\eqref{e:mr} in fact gives a bound which is unform in $a >0$ and nonzero $x$.
On the other hand,
for $\q=1$ and $d>2$  an upper bound that \emph{is} uniform in $a \ge 0$ and in $x$
is given in \cite[Proposition~2.1]{Slad20_wsaw}, which asserts that
there are constants $\kappa_1>0$ and $\kappa\in (0,1)$ such that
for all $a \ge 0$ and all $x \neq 0$,
\begin{equation}
\label{e:srwbd}
    C^{(1)}_a(x) \le \kappa_1 \frac{1}{|x|_a^{d-2}}e^{-\kappa m_a|x|_a}
    .
\end{equation}
(By changing the constants, another norm than $|x|_a$ could be used in the above.)
As in \cite[Lemma~3.3]{MS22}, the inequality \eqref{e:srwbd} easily implies that
for general integers $\q\ge 1$ and dimensions $d>2\q$,
\begin{equation}
\label{e:srwbd-q}
    C_a^{(\q)}(x) \le \kappa_\q \frac{1}{|x|_a^{d-2\q}}e^{-\kappa m_a|x|_a}
    .
\end{equation}
The uniform upper bound \eqref{e:srwbd} combines
the critical $|x|_a^{-(d-2)}$ decay with the exponential decay for $a>0$.
The relaxation of the exponential decay via $\kappa<1$ compensates for the
differing power laws in \eqref{e:srwbd-q} and in regime (I).
Bounds of the form \eqref{e:srwbd} have been proved and applied to analyse the
critical behaviour of weakly self-avoiding
walk in dimensions $d>4$ \cite{MS22,Slad20_wsaw} and of percolation in dimensions
$d>6$ \cite{HMS22}.

\subsection{Organisation}

The remainder of the paper is organised as follows.

In Section~\ref{sec:Bessel}, we give the elementary derivation of the
representation \eqref{e:BesselC} of $C_a^{(q)}(x)$ in terms of the modified
Bessel function $I_\nu$.  This representation in terms of a $1$-dimensional integral
is the basis for all of our analysis.  We then recall properties of $I_\nu$ which
enable the asymptotic evaluation of the integral \eqref{e:BesselC}.

In Section~\ref{sec:subcrit}, we prove Theorem~\ref{thm:green-asy}(i), pertaining
to the Ornstein--Zernike regime.  In this regime,
the Bessel integral \eqref{e:BesselC} has an exponential
factor in the integrand which makes it amenable to application of the Laplace method.
The norm $|\cdot|_a$ emerges naturally from a computation involving
the critical point which dominates the behaviour arising in the Laplace method.

In Section~\ref{sec:contlim}, we prove Theorem~\ref{thm:green-asy}(ii), pertaining
to the critical regime.
In the critical regime, there is no longer any exponential behaviour in the integrand
of the Bessel integral \eqref{e:BesselC} and there is no need for the Laplace method.
Given the well-known asymptotics for $I_\nu$ recalled in Section~\ref{sec:Bessel},
the proof follows quickly from the dominated convergence theorem.

Finally, in Appendix~\ref{sec:continuumGreen} we provide an elementary proof that
the formulas \eqref{e:Gaqx-2d}--\eqref{e:G0qx} for the continuum Green function are
equal to the integral \eqref{e:Gx} over $\R^d$ in the sense of tempered distributions,
and in Appendix~\ref{sec:Besselpf} we discuss properties of the modified Bessel function.

\section{Bessel representation}
\label{sec:Bessel}

For any integer $\nu \geq 0$ and $ t \in \R$ the modified Bessel function of the first kind $I_\nu(t)$ is given by
\begin{equation}
	I_\nu(t) = \frac{1}{\pi}\int_0^\pi e^{t \cos\theta+i\nu\theta}\D \theta.
\end{equation}
For our purposes it is more useful to consider
\begin{equation}
\label{e:Ibardef}
	\bar I_\nu(t)
    =
    e^{-t} I_\nu(t)
    =
    \frac{1}{\pi}\int_0^\pi e^{-t (1-\cos\theta)+i\nu\theta}\D \theta
\end{equation}
which has the exponential growth of $I_\nu(t)$ cancelled.
The following lemma provides the well-known integral representation that
is the foundation for the proof of Theorem~\ref{thm:green-asy}.

\begin{lemma}
\label{lem:CBessel}
For $d \ge 1$, $a \ge 0$, $\q>0$,  $x \in \Z^d$, and with the restriction $d>2\q$ when $a=0$,
\begin{align}
\label{e:Besselrep0}
    C^{(\q)}_{a}(x)
				 &=
 \frac{1}{\Gamma(\q)}\int_0^\infty t^{\q-1}e^{-a^2t}	
    \prod_{j=1}^{d}\bar I_{x_j}(t/d) \D t.
\end{align}
\end{lemma}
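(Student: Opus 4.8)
The plan is to express the defining Fourier integral \eqref{e:Cxq} through the Gamma-function identity $\lambda^{-\q} = \frac{1}{\Gamma(\q)}\int_0^\infty t^{\q-1}e^{-\lambda t}\,\D t$, valid for $\mathrm{Re}\,\lambda > 0$, applied with $\lambda = a^2 + 1 - \hat D(k)$. Since $1-\hat D(k) = \frac1d\sum_{j=1}^d(1-\cos k_j) \ge 0$ with strict positivity for $k \neq 0$, we have $\lambda > 0$ for all $k$ when $a > 0$, and $\lambda > 0$ for a.e.\ $k$ when $a = 0$; so the substitution is legitimate pointwise in $k$. This rewrites
\begin{equation}
    C_a^{(\q)}(x) = \frac{1}{\Gamma(\q)} \int_{[-\pi,\pi]^d} e^{ik\cdot x} \int_0^\infty t^{\q-1} e^{-(a^2+1-\hat D(k))t}\,\D t\,\frac{\D k}{(2\pi)^d}.
\end{equation}

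Next I would interchange the $t$-integral and the $k$-integral by Tonelli/Fubini. The integrand is $t^{\q-1}e^{-(a^2+1)t}e^{\hat D(k)t}e^{ik\cdot x}$, and since $|e^{\hat D(k)t}| \le e^t$, the absolute value is bounded by $t^{\q-1}e^{-a^2 t}$ on $[-\pi,\pi]^d \times (0,\infty)$, up to the $(2\pi)^{-d}$ volume factor; this is integrable when $a > 0$. When $a = 0$ one needs the hypothesis $d > 2\q$: near $t = \infty$ one uses that $\int_{[-\pi,\pi]^d} e^{\hat D(k)t}\,\frac{\D k}{(2\pi)^d} = \prod_j \frac1\pi\int_0^\pi e^{(t/d)\cos k_j}\,\D k_j$ behaves like a constant times $t^{-d/2}$ (from the standard Laplace-type asymptotics of $I_0(t/d) \sim e^{t/d}(2\pi t/d)^{-1/2}$), so $t^{\q-1}\cdot t^{-d/2}$ is integrable at infinity precisely when $\q - 1 - d/2 < -1$, i.e.\ $d > 2\q$; near $t=0$ the factor $t^{\q-1}$ is integrable since $\q > 0$. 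Granting integrability, Fubini lets us pull the $k$-integral inside:
\begin{equation}
    C_a^{(\q)}(x) = \frac{1}{\Gamma(\q)} \int_0^\infty t^{\q-1} e^{-(a^2+1)t} \left( \int_{[-\pi,\pi]^d} e^{ik\cdot x} e^{\hat D(k)t}\,\frac{\D k}{(2\pi)^d} \right) \D t.
\end{equation}

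Finally I would factorise the inner $k$-integral over coordinates. Because $\hat D(k)t = \sum_{j=1}^d (t/d)\cos k_j$ and $e^{ik\cdot x} = \prod_j e^{ik_j x_j}$, the inner integral splits as $\prod_{j=1}^d \frac{1}{2\pi}\int_{-\pi}^\pi e^{ik_j x_j + (t/d)\cos k_j}\,\D k_j$. Each factor equals $I_{x_j}(t/d)$ by the standard integral representation of the modified Bessel function (using $I_{-\nu} = I_\nu$ for integer $\nu$ to handle negative coordinates, and noting the imaginary part integrates to zero by symmetry). Pulling the factor $e^{-t} = e^{-(t/d)\cdot d}$ through the product converts each $I_{x_j}(t/d)$ into $\bar I_{x_j}(t/d) = e^{-t/d}I_{x_j}(t/d)$ as in \eqref{e:Ibardef}, leaving the prefactor $e^{-(a^2+1)t}\cdot e^{t} = e^{-a^2 t}$; this yields exactly \eqref{e:Besselrep0}. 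I expect the main obstacle — really the only nontrivial point — to be the justification of the Fubini interchange in the critical case $a = 0$, where one must be careful to record the correct large-$t$ decay rate $t^{-d/2}$ of the $k$-integrated Bessel product and match it against the $d > 2\q$ hypothesis; for $a > 0$ the dominating function $t^{\q-1}e^{-a^2 t}$ makes everything routine.
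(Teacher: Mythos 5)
Your proposal is correct and follows essentially the same route as the paper: the Gamma-integral identity for $(a^2+1-\hat D(k))^{-\q}$, Fubini, and coordinate-wise factorisation of the $k$-integral into modified Bessel functions, with the $d>2\q$ condition entering exactly through the $t^{-d/2}$ large-$t$ decay of $\prod_j\bar I_{x_j}(t/d)$. One small slip in phrasing: the integral $\int_{[-\pi,\pi]^d}e^{\hat D(k)t}\,\frac{\D k}{(2\pi)^d}=I_0(t/d)^d$ grows like $e^{t}t^{-d/2}$, and it is only after multiplying by the $e^{-t}$ coming from $e^{-(a^2+1)t}$ that one gets the $t^{-d/2}$ decay you use — your conclusion is right, but the sentence as written drops the $e^t$ factor.
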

\begin{proof}
Let $\hat F(k) =a^2+1-\hat D(k)$.
We use the identity
\begin{align}
\label{e:reciprocal}
    \frac{1}{v^{\q}}  &= \frac{1}{\Gamma(\q)}\int_0^\infty t^{\q-1}e^{-tv}\D t
    \qquad
    (v>0)
\end{align}
in the definition \eqref{e:Cxq} to obtain
\begin{align}
\label{e:Besselrep}
    C^{(\q)}_{a}(x)
	&= \int_{[-\pi,\pi]^d}\frac{e^{ik\cdot x}}{\hat F(k)^\q}\frac{\D k}{(2\pi)^d}
    \nnb
      &= \frac{1}{\Gamma(\q)}\int_{[-\pi,\pi]^d}
      \int_0^\infty t^{\q-1}e^{- t\hat F(k)}\D t\;
      e^{ikx} \frac{\D k}{(2\pi)^d}\nnb
	&= \frac{1}{\Gamma(\q)}\int_0^\infty t^{\q-1}e^{-a^2t}	
    \prod_{j=1}^{d}\bar I_{x_j}(t/d) \D t,
\end{align}
and the proof is complete.  Note that there is no issue with convergence of this last
integral at $t=0$, and for large $t$ convergence is guaranteed
(assuming $d>2\q$ when $a=0$)
by the fact that
$\bar I_\nu(z) \sim (2\pi z)^{-1/2}$ as $z \to \infty$.   In particular, this justifies
the above application of Fubini's Theorem.
\end{proof}

To study the Ornstein--Zernike regime we apply the change of variable $t=dnv$
to the integral representation \eqref{e:Besselrep0} to obtain
\begin{align}
\label{e:BesselOZ}
    C^{(\q)}_{a}(nx)
				 &=
 \frac{d^{q}n^q}{\Gamma(\q)}\int_0^\infty v^{\q-1}e^{-dna^2v}
    \prod_{j=1}^{d} \bar I_{nx_j}(nv) \D v
    .
\end{align}
For the continuum regime we will also make the replacement $v=nt/d$ in \eqref{e:BesselOZ}
and use
\begin{align}
    C_{{s}/n}^{(\q)}(nx)
    &=
    \frac{n^{2\q}}{\Gamma(\q)} \int_0^\infty  t^{\q-1}
    e^{-\scale^2 t} \prod_{j=1}^{d}\bar I_{n x_j} (n^2t/d) \, \D t
    .
\label{e:Besselcont}
\end{align}

To
study the integrals \eqref{e:BesselOZ}--\eqref{e:Besselcont}
we will make use of well-established asymptotic
properties for $I_\nu$.
To state these properties, for $\nu > 0$ and $t >0$ we define
\begin{equation}
\label{e:L_psi_def}
	L_\nu(t) = \frac{1}{(2\pi\nu)^{1/2}} \frac{e^{\nu \psi(t)}}{(1+t^2)^{1/4}},
    \qquad
    \psi(t) =
    -t+
    \sqrt{1+t^2} + \log\Big( \frac{t}{1+\sqrt{1+t^2}}\Big).
\end{equation}
The identity $\log( \frac{t}{1+\sqrt{1+t^2}}) = -\arcsinh(t^{-1})$ gives a
useful alternate representation for $\psi$.
The first three derivatives of $\psi$ are:
\begin{align}
\label{e:psi_deriv}
	\psi'(t)
	&= -1+ \sqrt{1+t^{-2}},\\
\label{e:psi_deriv2}
	\psi''(t)
	&= -\frac{t^{-3}}{\sqrt{1+t^{-2}}},\\
\label{e:psi_deriv3}
	\psi'''(t)
	&= \frac{2t^{-6}+3t^{-4}}{(1+t^{-2})^{3/2}}.
\end{align}
The following lemma
gives asymptotic representations
of the Bessel function of large argument and large order.
The proof of the lemma is deferred to Appendix~\ref{sec:Besselpf}.
We use \eqref{e:ILasy} for the OZ regime and \eqref{e:Inunu}--\eqref{e:Lclaim}
for the continuum regime.

\begin{lemma}
\label{lem:Inunu2}
As $\nu \to \infty$,
\begin{equation}
\label{e:ILasy}
	\bar I_\nu(\nu t) = L_\nu (t)(1+o(1))
\end{equation}
where the $o(1)$ is uniform in $t>0$.
Also, as $\nu \to \infty$, for any $s >0$,
\begin{align}
\label{e:Inunu}
    \bar I_\nu(\nu^2 s) &\sim \frac{e^{-1/2s}}{\nu (2\pi s)^{1/2}},
\end{align}
with an error that is not uniform in $s$.
Finally, there exist $C,\delta, \nu_0 >0$ such that
\begin{align}
\label{e:Lclaim}
    \bar I_{\nu}(\nu^2 s)
    &\le C
    \Big(
    e^{ - \delta\nu} \1_{2\nu s < 1}
    +
    \nu^{-1} s^{-1/2} e^{- \delta/s} \1_{2\nu s \ge 1}
    \Big)
    \qquad
    (\nu \ge \nu_0,\; s>0).
\end{align}
\end{lemma}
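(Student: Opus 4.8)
The plan is to obtain all three parts of Lemma~\ref{lem:Inunu2} from a single substantive input --- the classical uniform (Debye-type) asymptotic expansion of $I_\nu$ of large order --- together with elementary monotonicity and Taylor properties of the function $\psi$ from \eqref{e:L_psi_def}. Once \eqref{e:ILasy} is in hand, the assertions \eqref{e:Inunu} and \eqref{e:Lclaim} follow by soft arguments, so the main obstacle is proving the uniformity in \eqref{e:ILasy} over the whole half-line $t\in(0,\infty)$.

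For \eqref{e:ILasy}: since $\bar I_\nu(\nu t)=e^{-\nu t}I_\nu(\nu t)$, it suffices to show $I_\nu(\nu t)\sim(2\pi\nu)^{-1/2}(1+t^2)^{-1/4}e^{\nu(t+\psi(t))}$ uniformly in $t>0$. Starting from the integral representation \eqref{e:Ibardef} (valid for integer $\nu$; for general real $\nu$ one uses instead the Hankel-type loop representation $I_\nu(z)=\frac{1}{2\pi i}\int_{\mathcal{C}}e^{z\cosh w-\nu w}\,\D w$) and substituting $z=\nu t$, one is led to integrate $e^{\nu g(\theta)}$ with phase $g(\theta)=t\cos\theta+i\theta$. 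Its relevant critical point is $\theta_0=i\arcsinh(t^{-1})$ on the imaginary axis, where, using $\arcsinh(t^{-1})=-\log(t/(1+\sqrt{1+t^2}))$, one computes $g(\theta_0)=\sqrt{1+t^2}-\arcsinh(t^{-1})=\sqrt{1+t^2}+\log(t/(1+\sqrt{1+t^2}))=t+\psi(t)$ and $g''(\theta_0)=-t\cosh(\arcsinh(t^{-1}))=-\sqrt{1+t^2}$. Deforming the contour through $\theta_0$ and applying Laplace's method gives the leading term, the Gaussian prefactor $(2\pi\nu\,|g''(\theta_0)|)^{-1/2}$ being exactly $(2\pi\nu)^{-1/2}(1+t^2)^{-1/4}$. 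The delicate point is that the relative error must be $o(1)$ uniformly over the whole range: on compact subsets of $(0,\infty)$ this is routine, but as $t\to0$ the critical point escapes up the imaginary axis and $I_\nu(\nu t)$ degenerates to its leading power-series term $\sim(\nu t/2)^\nu/\Gamma(\nu+1)$, while as $t\to\infty$ the critical point tends to $0$ and $I_\nu(\nu t)\sim e^{\nu t}(2\pi\nu t)^{-1/2}$; reconciling both extremes requires tracking the location of the saddle and the width of the Gaussian. This uniformity is built into Olver's uniform asymptotic expansion for $I_\nu(\nu t)$ (whose error bound is valid uniformly for $t\in(0,\infty)$), which is the cleanest thing to invoke.

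For \eqref{e:Inunu}: write $\nu^2 s=\nu\cdot(\nu s)$ and apply \eqref{e:ILasy} with $t=\nu s$ --- this is precisely where the uniformity of the $o(1)$ in \eqref{e:ILasy} is used --- to get $\bar I_\nu(\nu^2 s)=L_\nu(\nu s)(1+o(1))$ as $\nu\to\infty$ for each fixed $s>0$. It remains to evaluate $L_\nu(\nu s)=(2\pi\nu)^{-1/2}(1+\nu^2 s^2)^{-1/4}e^{\nu\psi(\nu s)}$: from \eqref{e:psi_deriv}, $\psi'(t)=-1+\sqrt{1+t^{-2}}=\tfrac12 t^{-2}+O(t^{-4})$ as $t\to\infty$ and $\psi(t)\to0$, so $\psi(t)=-\tfrac1{2t}+O(t^{-3})$ and hence $\nu\psi(\nu s)=-\tfrac1{2s}+O(\nu^{-2}s^{-3})\to-\tfrac1{2s}$ (the factor $s^{-3}$ explaining the lack of uniformity in $s$); combined with $(1+\nu^2 s^2)^{-1/4}\sim(\nu s)^{-1/2}$ this gives $L_\nu(\nu s)\sim\nu^{-1}(2\pi s)^{-1/2}e^{-1/(2s)}$, which is \eqref{e:Inunu}.

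For \eqref{e:Lclaim}: by \eqref{e:ILasy} there is $\nu_1$ with $\bar I_\nu(\nu t)\le2L_\nu(t)$ for all $t>0$ and $\nu\ge\nu_1$, so with $t=\nu s$ it suffices to bound $L_\nu(\nu s)$ from above. Two elementary facts about $\psi$ suffice. First, by \eqref{e:psi_deriv} $\psi'>0$ on $(0,\infty)$, with $\psi\to-\infty$ at $0^+$ and $\psi\to0$ at $\infty$, so $\psi$ is strictly increasing and strictly negative there. Second, $t\mapsto t\psi(t)$ is continuous and strictly negative on $[\tfrac12,\infty)$ and tends to $-\tfrac12$ at infinity (by the previous step), so $c:=-\sup_{t\ge1/2}t\psi(t)>0$ and $\psi(t)\le-c/t$ for all $t\ge\tfrac12$. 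Put $\delta=\min\{c,\,-\psi(\tfrac12)\}>0$ and $\nu_0=\max\{\nu_1,(2\pi)^{-1}\}$. If $2\nu s\ge1$ then $\nu s\ge\tfrac12$, so $\nu\psi(\nu s)\le-c/s\le-\delta/s$ and $(1+\nu^2 s^2)^{1/4}\ge(\nu s)^{1/2}$, whence $L_\nu(\nu s)\le(2\pi)^{-1/2}\nu^{-1}s^{-1/2}e^{-\delta/s}$. If $2\nu s<1$ then $\nu s<\tfrac12$, so $\psi(\nu s)\le\psi(\tfrac12)\le-\delta$ and $(1+\nu^2 s^2)^{1/4}\ge1$, whence $L_\nu(\nu s)\le(2\pi\nu)^{-1/2}e^{-\delta\nu}\le e^{-\delta\nu}$ for $\nu\ge\nu_0$. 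Since $\bar I_\nu(\nu^2 s)\le2L_\nu(\nu s)$, taking $C=2$ yields \eqref{e:Lclaim} for all $\nu\ge\nu_0$ and $s>0$.
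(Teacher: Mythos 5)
Your proposal is correct and follows essentially the same route as the paper: \eqref{e:ILasy} is obtained by invoking Olver's uniform asymptotic expansion for $I_\nu(\nu t)$, and \eqref{e:Inunu} and \eqref{e:Lclaim} are then deduced by substituting $t=\nu s$ into $L_\nu$ and splitting at $2\nu s=1$. The only (harmless) difference is in the bookkeeping for \eqref{e:Lclaim}, where you bound $e^{\nu\psi(\nu s)}$ directly via the monotonicity of $\psi$ and the fact that $\sup_{t\ge 1/2}t\psi(t)<0$, whereas the paper factors $L_\nu$ through $h_\nu(t)=\big(\tfrac{t}{1+\sqrt{1+t^2}}\big)^\nu$ and bounds the exponential and algebraic pieces separately; both yield the same constants-and-$\delta$ conclusion.
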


\section{Ornstein--Zernike regime: Proof of Theorem~\ref{thm:green-asy}(i)}
\label{sec:subcrit}

In this section, we prove Theorem~\ref{thm:green-asy}(i).
Let $x$ be a vector in $\Z^d \setminus \{0\}$, and without loss of generality assume that
$x_1 \ge x_2 \ge \cdots \ge x_d \ge 0$.  We write $\xtop$ for the number of nonzero components of $x$.  Throughout this section, we consider a bounded sequence $a_n \in (0, a_{\max}]$ with $a_nn\to\infty$.
To lighten the notation, we write simply $a$ in place of $a_n$.
In particular, $a$ can be independent of $n \to \infty$, or we can have $a \to 0$ as long
as $an \to \infty$.

We start with \eqref{e:BesselOZ}, which states that
\begin{align}
    C^{(\q)}_{a}(nx)
				 &=
 \frac{d^{q}n^q}{\Gamma(\q)}\int_0^\infty v^{\q-1}e^{-dna^2v}
 (\bar I_0(nv))^{d-\xtop}
    \prod_{j=1}^{\xtop} \bar I_{nx_j}(nv) \D v
    .
\end{align}
With the asymptotic formula for $\bar I_\nu(\nu t)$ from \eqref{e:ILasy} together
with the definitions of $L_\nu$ and $\psi$
from \eqref{e:L_psi_def}, after some algebra this leads to
\begin{align}
\label{e:CqOZI}
    C^{(\q)}_{a}(nx)
     &= (1+\delta_n)
     \alpha_\q n^{\q-\xtop/2}
     \int_{0}^\infty h_{n,x}(v)
     e^{-ng_{a,x}(v)}  	
     \D v
     ,
\end{align}
where $\delta_n \to 0$ (uniformly in nonzero $x$) and
\begin{align}
\label{e:alphaqdef}
    \alpha_\q &= \frac{d^\q}{(2\pi)^{d/2}\Gamma(\q)}, \\
\label{e:hdef}
	h_{n,x}(v) &=  v^{\q-1}
    (\sqrt{2\pi}\bar I_0(nv))^{d-\xtop}
    \prod_{j=1}^{\xtop} \frac{1}{(x_j^2+v^2)^{1/4}}, \\
\label{e:gdef}
	g_{a,x}(v) &=
    da^2v - \sum_{j = 1}^\xtop  x_j \psi(v/x_j)
.
\end{align}

We first solve $g_{a,x}'(v) = 0$.  By definition of $g_{a,x}$ and by \eqref{e:psi_deriv},
\begin{align}
\label{e:gder}
	 g_{a,x}'(v) &= d(1+a^2) - \sum_{j=1}^d \sqrt{1+v^{-2}x^2_j} .
\end{align}
By the definition of $u_a(x)$ in \eqref{e:udef1}, we see that
the unique solution of the equation $g_{a,x}'(v) = 0$
is $v_a(x)=u_a^{-1}$, where for notational convenience we write $u_a(x)$ simply as $u_a$.
  We will soon see that this solution is
the location of the unique minimum of $g_{a,x}$.
Since we are allowing the variable $a$ to go to zero,
which sends $v_a(x)$ to infinity, it is convenient to relocate this minimum to $1$.
We therefore rescale
the representation \eqref{e:CqOZI} via $v = y/u_a$ and obtain
\begin{align}
\label{e:CqOZII}
    C^{(\q)}_{a}(nx)
     &= (1+\delta_n)
     \alpha_\q \Big(\frac{n}{u_a}\Big)^{\q-d/2}
     \int_0^\infty \bar h_{n,a,x}(y)
     e^{-n\bar g_{a,x}(y)}\D y
\end{align}
with
\begin{align}
\label{e:bardefs}
	\bar h_{n,a,x}(y) =
	h_{n}(y/u_a)n^{(d-r)/2}u_a^{q-1-d/2}, \quad
	\bar g_{a,x}(y) =
    g_{a,x}(y/u_a)
    .
\end{align}
The minimum of $\bar{g}_{a,x}$ is located exactly at $1$, as
is illustrated in Figure~\ref{fig:g}.

\begin{figure}
\center
\includegraphics[scale = 0.5]{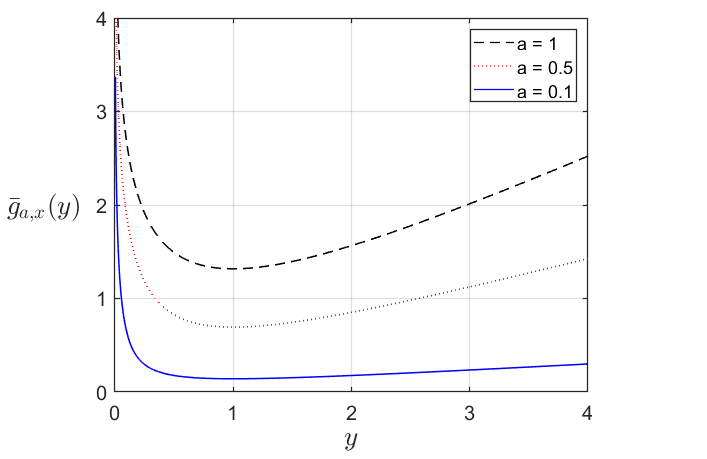}
\caption{Plot of $\bar g_{a,x}$ for different values of $a$, for $d=1$ and $x=1$.
\label{fig:g}
}
\end{figure}

Recall the norm $|x|_a$ from Definition~\ref{def:norm}.
We write $\hat x = x/|x|_a$ and $\hat u_a = u_a(\hat x)$.
The scaling relation $\lambda u_a(\lambda x) = u_a(x)$ for all  $\lambda> 0$
follows from the definition of $u_a(x)$ in \eqref{e:udef1},
and implies that $u_a  x_j = \hat u_a \hat x_j$ and
$\hat u_a = |x|_a u_a$.
The definitions lead to
\begin{align}
\label{e:gbarxhat}
    \bar g_{a,x}(y) & = |x|_a \, \bar g_{a,\hat x}(y),
    \\
\label{e:bar_h_new}
    \bar h_{n,a,x}(y) & =
    y^{\q-1}
    \Big(\sqrt{2\pi n/u_a}\bar I_0(ny/u_a) \Big)^{d-\xtop}
    \prod_{j=1}^\xtop \frac{1}{(y^2+ \hat u_a^2 \hat x_j^2)^{1/4}}
    .
\end{align}
The $a$-dependence of $\bar{g}_{a,x}''(y)$
hinders an immediate application of a standard theorem for the Laplace method
such as \cite[Theorem~7.1, p.~127]{Olve97},
so we prove Theorem~\ref{thm:green-asy}(i) by analysing the integral in \eqref{e:CqOZII} directly.
To do so, we require the detailed understanding of
the $\bar g_{a,x}$ that we present next.
As a preliminary, we note that
it follows from the definition of $u_a$
in \eqref{e:udef1} that $\hat u_a  = O(a)$ as $a\to 0$ uniformly in $x \neq 0$, and moreover
that
\begin{align}
	\label{e:asymptotics_for_u}
	 \hat  u_a  = \sqrt{2d}  \frac{a}{\|\hat x\|_2}
    (1+O(a^2 ))
    \quad
    \text{uniformly in } x \neq 0 .
\end{align}

\begin{lemma}
\label{lem:ga}
Let $a>0$.
The function $\bar g_{a,x}$ is convex and attains its unique minimum on $(0 ,\infty)$ at $1$,
with $\bar g_{a,x}(1)= m_a|x|_a$.  Also
\begin{align}
\label{e:gbarprimeprime}
    \bar g_{a,x}''(y) &
    =
    | x|_a  \, \bar g_{a,\hat x}''(y)
     =
    | x|_a  \hat u_a
    \sum_{j=1}^d \frac{\hat x_j^2y^{-3}}{\sqrt{1+\hat u_a^2 \hat x_j^2 y^{-2}}}
     ,
    \\
\label{e:gbarppp}
    \bar g'''_{a,x}(y)
    &
    =
    | x|_a  \, \bar g_{a,\hat x}'''(y)
    = -
    |x|_a \hat u_a
    \sum_{j=1}^d
    \frac{3\hat x_j^2y^{-4}+2\hat u_a^2 \hat x_j^4y^{-6}}
    {(1+\hat u_a^2 \hat x_j^2 y^{-2})^{3/2}}
    .
\end{align}
In addition,
for any $\alpha \in \R$ and any $n \ge 1$,
\begin{align}
\label{e:g_at_infty}
	\lim_{y \to \infty} y^{\alpha} e^{-n \bar g_{a,x}(y)} = 0.
\end{align}
Finally, if $\alpha<0$ then
the maximum of $y \mapsto y^{\alpha} e^{-n\bar g_{a,x}(y)}$ for $y\in (0,1]$
is uniquely attained and lies in the interval $[\frac 12,1]$ provided that
$a$ is bounded and $an$ is
sufficiently large (depending on $\alpha$ but not on nonzero $x$).
\end{lemma}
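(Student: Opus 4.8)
The plan is to establish the four assertions in the order listed, the last being the substantive one.

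\emph{Derivatives, convexity, and the minimum.} Differentiating $g_{a,x}(v)=da^2v-\sum_{j=1}^{\xtop}x_j\psi(v/x_j)$ and using \eqref{e:psi_deriv}--\eqref{e:psi_deriv3} (the components $x_j=0$ with $j>\xtop$ may be included in the sums, since after differentiation they contribute nothing) gives \eqref{e:gder} together with $g_{a,x}''(v)=\sum_{j=1}^d x_j^2v^{-3}(1+x_j^2v^{-2})^{-1/2}$ and the analogous third-derivative formula coming from \eqref{e:psi_deriv3}. Since $\bar g_{a,x}(y)=g_{a,x}(y/u_a)$, we have $\bar g_{a,x}^{(k)}(y)=u_a^{-k}g_{a,x}^{(k)}(y/u_a)$; substituting $v=y/u_a$ and using the scaling identities $u_ax_j=\hat u_a\hat x_j$ and $u_a=\hat u_a/|x|_a$ recorded before the lemma turns these into \eqref{e:gbarprimeprime}--\eqref{e:gbarppp}, while the factorisations $\bar g_{a,x}^{(k)}=|x|_a\bar g_{a,\hat x}^{(k)}$ are \eqref{e:gbarxhat} differentiated. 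As $x\neq0$, at least one $\hat x_j>0$, so \eqref{e:gbarprimeprime} gives $\bar g_{a,x}''>0$ on $(0,\infty)$, i.e.\ $\bar g_{a,x}$ is strictly convex. By \eqref{e:gder}, $\bar g_{a,x}'(y)=u_a^{-1}g_{a,x}'(y/u_a)=0$ is equivalent to $\tfrac1d\sum_j\sqrt{1+x_j^2(u_a/y)^2}=1+a^2$, which by the defining relation \eqref{e:udef1} of $u_a(x)$ forces $y=1$; combined with strict convexity, $1$ is the unique minimiser. For the value there, write $\psi(t)=-t+\sqrt{1+t^2}-\arcsinh(t^{-1})$, set $t=1/(x_ju_a)$, and sum $x_j\psi(1/(x_ju_a))$ over $j$; \eqref{e:udef1} cancels the rational part and leaves $\bar g_{a,x}(1)=g_{a,x}(1/u_a)=\sum_j x_j\arcsinh(x_ju_a)=m_a|x|_a$ by \eqref{e:normdef1}.

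\emph{Behaviour at infinity.} By \eqref{e:psi_deriv}, $\psi'>0$, and $\psi(t)\to0$ as $t\to\infty$, so $\psi<0$ on $(0,\infty)$; hence $\bar g_{a,x}(y)=\tfrac{da^2y}{u_a}-\sum_j x_j\psi(y/(x_ju_a))\ge\tfrac{da^2y}{u_a}$. Since $a>0$ and $u_a<\infty$, this lower bound grows linearly, so $y^\alpha e^{-n\bar g_{a,x}(y)}\le y^\alpha e^{-nda^2y/u_a}\to0$ for every $\alpha\in\R$ and $n\ge1$.

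\emph{The maximum for $\alpha<0$.} The key observation is that $W(y)\bydef-y\,\bar g_{a,\hat x}'(y)$ is strictly decreasing on $(0,\infty)$. From the computations above, $W(y)=\hat u_a^{-1}\bigl(\sum_{j=1}^d\sqrt{y^2+\hat u_a^2\hat x_j^2}-y\,d(1+a^2)\bigr)$, whence $W'(y)=\hat u_a^{-1}\bigl(\sum_j y(y^2+\hat u_a^2\hat x_j^2)^{-1/2}-d(1+a^2)\bigr)<\hat u_a^{-1}(d-d(1+a^2))<0$, using $y(y^2+c)^{-1/2}<1$ for $c>0$, $\hat x\neq0$, and $a>0$; moreover $W(1)=0$ by \eqref{e:udef1}. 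Setting $\phi(y)\bydef y^\alpha e^{-n\bar g_{a,x}(y)}$ and using $\bar g_{a,x}=|x|_a\bar g_{a,\hat x}$ from \eqref{e:gbarxhat}, one finds $\phi'(y)/\phi(y)=y^{-1}\bigl(\alpha+n|x|_aW(y)\bigr)$, so $\phi'(y)>0$ precisely when $n|x|_aW(y)>|\alpha|$. It thus remains to bound $W(\tfrac12)$ from below: from $\sqrt{\tfrac14+c}=\sqrt{1+c}-\tfrac{3/4}{\sqrt{1+c}+\sqrt{1/4+c}}\ge\sqrt{1+c}-\tfrac12$ and \eqref{e:udef1} we get $\sum_j\sqrt{\tfrac14+\hat u_a^2\hat x_j^2}\ge d(1+a^2)-\tfrac d2$, hence $W(\tfrac12)\ge\tfrac{da^2}{2\hat u_a}$. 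Finally $\hat u_a\le Ca$ uniformly in $x\neq0$ and $a\in(0,a_{\max}]$: for small $a$ this is \eqref{e:asymptotics_for_u}, and for $a$ bounded away from $0$ it follows from $\hat u_a\le d(1+a^2)/\|\hat x\|_1\le d(1+a_{\max}^2)$, where the first bound uses $\sqrt{1+\hat x_j^2\hat u_a^2}\ge\hat x_j\hat u_a$ in \eqref{e:udef1} and the second uses $\|\hat x\|_1\ge|\hat x|_a=1$ from \eqref{e:normineq}. Therefore $W(\tfrac12)\ge c_0a$ for some $c_0=c_0(d,a_{\max})>0$, so, using $|x|_a\ge1$ and monotonicity of $W$, we get $n|x|_aW(y)\ge nW(\tfrac12)\ge c_0(an)>|\alpha|$ for all $y\in(0,\tfrac12]$ once $an>|\alpha|/c_0$. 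Hence $\phi$ is strictly increasing on $(0,\tfrac12]$; on $[\tfrac12,1]$ the strictly decreasing map $y\mapsto n|x|_aW(y)$ passes through $|\alpha|$ at a unique point $y^*\in(\tfrac12,1)$, and $\phi$ increases on $(0,y^*]$ and decreases on $[y^*,1]$, so its maximum over $(0,1]$ is uniquely attained, at $y^*\in[\tfrac12,1]$.

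I expect the main obstacle to be the uniform-in-$x$ lower bound $W(\tfrac12)\ge c_0a$: it is precisely the linear-in-$a$ order of this bound that makes ``$an$ large'' (rather than ``$n$ large'' with $a$ fixed) the right hypothesis, and getting the constant to be independent of the direction $\hat x$ requires combining the small-$a$ asymptotics \eqref{e:asymptotics_for_u} with a separate elementary estimate in the regime where $a$ stays bounded away from $0$.
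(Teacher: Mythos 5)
Your proof is correct and follows essentially the same route as the paper's: both analyse the sign of the logarithmic derivative of $y\mapsto y^{\alpha}e^{-n\bar g_{a,x}(y)}$ via a monotone auxiliary function, and both locate the sign change by an evaluation at the half-point that produces a lower bound linear in $a$, so that ``$an$ large'' is the right hypothesis. Your execution is marginally cleaner in two spots: $W(y)=-y\,\bar g_{a,\hat x}'(y)$ is globally strictly decreasing, so unimodality of $y^{\alpha}e^{-n\bar g_{a,x}(y)}$ holds with no largeness condition (the paper instead shows its function $F$ is increasing only on $[0,u_a^{-1}]$ and only for $an$ large), and the uniform identity $W(1/2)\ge da^2/(2\hat u_a)$ combined with $\hat u_a\le Ca$ replaces the paper's two-case estimate of $F(1/(2u_a))$.
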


\begin{proof}
By definition, $\bar g_{a,x}(1)=g_{a,x}(1/u_a)$ and
\begin{align}
	g_{a,x}(1/u_a) &=
    u_a^{-1} \Big[d(a^2+1)- \sum_{j = 1}^d \sqrt{1+u_a^{2}x_j^{2}} \,\Big]
    + \sum_{j = 1}^d x_j\arcsinh(u_ax_j) \nnb
    &=\sum_{j = 1}^d x_j\arcsinh(u_ax_j) = m_a|x|_a,
\end{align}
with the last equality due to the definition of the norm in \eqref{e:normdef1}.
For the derivatives we use \eqref{e:gbarxhat} together with \eqref{e:gdef} and
the expressions
for $\psi''$ and $\psi'''$ given in \eqref{e:psi_deriv2}--\eqref{e:psi_deriv3}.
In particular,
it follows from \eqref{e:gbarprimeprime} that
$\bar g_{a,x}$ is convex and therefore the unique critical point at $1$ is
the location of the unique minimum.

For \eqref{e:g_at_infty},
it suffices to consider $g_{a,x}$ since there is no claim of
uniformity in $a$.
It can be seen from the definition of $\psi$ in \eqref{e:L_psi_def} that
$\psi(t) \to 0$ as $t \to \infty$.
With the definition of $g_{a,x}$ in \eqref{e:gdef}, this implies that
$g_{a,x}(y) \sim da^2y$ as $y \to \infty$,
so \eqref{e:g_at_infty} holds for any $n \geq 1$.

Finally, and most substantially, we let $\alpha<0$ and
prove that the maximum of $y \mapsto y^{\alpha} e^{-n\bar g_{a,x}(y)}$ for $y\in [0,1]$
is uniquely attained and lies in the interval $[\frac 12,1]$, provided that $a=a_n$ is
bounded and $an$ is
sufficiently large (depending on $\alpha$ but not on nonzero $x$).
We write
\begin{equation}
	y^{-|\alpha|} e^{-n\bar g_{a,x}(y)} = \exp[-n \varphi_{a,x,\alpha}(y)]
\quad \text{with} \quad
\varphi_{a,x,\alpha,n}(y) = \bar g_{a,x}(y) + \frac{|\alpha|}{n}\log y.
\end{equation}
To find a critical point of $\varphi=\varphi_{a,x,\alpha,n}(y)$ we first observe, as in \eqref{e:gder}, that
\begin{align}
	\varphi'(y)
	= \frac{1}{u_a}F(y/u_a)
    \quad \text{with} \quad
\label{e:tilde_g_prime}
	F(t) = d(1+a^2) - \sum_{j=1}^d \sqrt{1+t^{-2}x^2_j}  + \frac{|\alpha|}{n}t^{-1}.
\end{align}
Note that
$F(u_a^{-1})=|\alpha|u_a/n>0$, and that  $F(t) \sim -t^{-1}(\|x\|_1-|\alpha|/n)$
as $t\to 0$ so $F(t)\to -\infty$ uniformly in $n \geq |\alpha|+1$ and $x \neq 0$.
To prove that $\varphi$ has a unique critical point in $[0,1]$, it therefore suffices to
prove that $F(t)$ is increasing on $t\in [0,u_a^{-1}]$.
The derivative of $F$ is
\begin{align}
	F'(t)
	&= t^{-2}G(t)
\quad \text{with} \quad
	G(t)=\sum_{j=1}^d \frac{t^{-1}x_j^2}{\sqrt{1+t^{-2}x_j^2}}-\frac{|\alpha|}{n}.
\end{align}
By multiplying by $t$ in the numerator and denominator of the above sum, we see that
$G$ is decreasing.
As $t \to 0$, $G(t) \to \|x\|_1- |\alpha|/n \ge 1- |\alpha|/n>0$ uniformly in $x\neq0$ and in $n \geq |\alpha|+1$.  Also, since $u_ax_j=\hat u_a \hat x_j$,
\begin{align}
	G(1/u_a)
&
= \sum_{j=1}^d \frac{\hat u_a\hat x_j x_j}{\sqrt{1+\hat u_a^2 \hat x_j^2}}- \frac{|\alpha|}{n}.
\end{align}
Recall \eqref{e:asymptotics_for_u} and \eqref{e:normineq}.
The square root on the right-hand side is bounded above since $a$ is bounded, so
there is a constant $c_0>0$ such that, uniformly in nonzero $x$,
\begin{align}
	G(1/u_a)
	&\geq c_0 \hat u_a \frac{\|x\|_2^2}{|x|_a} - \frac{|\alpha|}{n}
    \ge \hat u_a\Big(\frac{c_0\|x\|_2^2}{\|x\|_1}-\frac{|\alpha|}{n  \hat u_a}\Big) .
\end{align}
This proves that $G(1/u_a)>0$ for $an $ large enough (independent of $x \neq 0$).
Therefore $G(t)>0$ for all $t \in [0, u_a^{-1}]$, which
completes the proof that $F$ is increasing on $[0, u_a^{-1}]$.
As noted previously,
this proves that there exists a unique $t^*(a,n,x)\in [0,u_a^{-1}]$ such that $F(t^*) = 0$.

To conclude, we now verify that $t^*\in[(2u_a)^{-1}, u_a^{-1}]$.
It suffices to show that $F(1/(2u_a))<0$ if $an$ is large
enough (independent of $x\neq 0$).
By definition of $u_a$,
\begin{align}
	F(1/(2u_a))
	&= d(1+a^2)-\sum_{j=1}^d\sqrt{1+4u_a^2x_j^2} + 2u_a \frac{|\alpha|}{n} \nnb
	&=-\sum_{j=1}^d \Big( \sqrt{1+4\hat u_a^2\hat x_j^2}- \sqrt{1+\hat u_a^2\hat x_j^2}\Big)
    + 2\hat u_a \frac{|\alpha|}{n|x|_a}.
\end{align}
If $a$ is bounded below away from zero then the last term on the right-hand side is as
small as desired by taking $n$ large, whereas the difference in the first term  is bounded
below by a positive constant, so $F(1/(2u_a))<0$ in this case. It therefore suffices to
consider small $a$, for which we see that
\begin{align}
	F(1/(2u_a))
	&\le - c\sum_{j=1}^d  \hat u_a^2\hat x_j^2
    + 2\hat u_a \frac{|\alpha|}{n|x|_a}
    \le -c' \hat u_a^2
    + 2\hat u_a \frac{|\alpha|}{n|x|_a}
    \le - \hat u_a^2 \big(c' - \frac{2|\alpha|}{\hat u_a n } \big) .
\end{align}
For $an$ sufficiently large (independent of  $x\neq 0$)
we conclude that $F(1/(2u_a))<0$.
This completes the proof.
\end{proof}

Next, we establish further properties of
the functions $\bar g_{a,x}$ and $\bar h_{n,a,x}$.
Let $\varepsilon>0$ and set $A_2=[1-\varepsilon,1+\varepsilon]$.
In the following, we are interested in the limit $\varepsilon \to 0$ and we write
$o(1)$ for error terms that go to zero in this limit.

\medskip \noindent \emph{Properties of $\bar g_{a,x} =|x|_a\bar g_{a,\hat x}$.}
By Lemma~\ref{lem:ga}, $\bar g_{a,x}$ is convex and has unique minimum
$\bar g_{a,x}(1)=m_a|x|_a$.  In particular, $\bar g_{a,\hat x}(1)=m_a$.
Taylor expansion of $\bar g_{a,\hat x}$ about $1$ gives
\begin{equation}
\label{e:gTay}
    \bar g_{a,\hat x}(y)
    =
    m_a  + \frac{1}{2!} \bar g_{a,\hat x}''(1) (y-1)^2
    + \frac{1}{3!}g_{a,\hat x}'''(y^*)(y-1)^3
\end{equation}
for some $y^*$ between $1$ and $y$.
We see from \eqref{e:asymptotics_for_u} and Lemma~\ref{lem:ga} that as
$\varepsilon\to 0$
we have
\begin{align}
\label{e:gA2_bis}
	\bar g_{a,\hat x}''(y) &=  \bar g_{a,\hat x}''(1) (1+O(\varepsilon))
    ,
\\
	\bar g_{a,\hat x}'''(y) &=  O(a)
    ,
\end{align}
uniformly in $y \in A_2$,
in $a\le a_{\max}$, and in $x \neq 0$.
By \eqref{e:gbarprimeprime}, $\bar g_{a,\hat x}''(1) \asymp a$
uniformly in $a\le a_{\max}$ and in $x$.
For the endpoints of $A_2$, the above implies that there exists a constant $\gamma>0$
such that, for $\varepsilon$ sufficiently small
\begin{align}
\label{e:gA3}
	\bar g_{a,\hat x}(1 \pm \varepsilon)
	&= \bar g_{a,\hat x}(1) + \frac{1}{2} \bar g''_{a,\hat x}(1) \varepsilon^2
        + O(a)\varepsilon^3
        \ge m_a  + \gamma a  \varepsilon^2
,
    \\
\label{e:gpA3}
	\bar g'_{a,\hat x}(1 \pm \varepsilon)
	&= \pm \bar g''_{a,\hat x}(1)\varepsilon_n +O(a \varepsilon^2)
	= O(a  \varepsilon)
    ,
\end{align}
uniformly in $a\le a_{\max}$ and in $x \neq 0$.

\medskip \noindent \emph{Properties of $\bar h_{n,a,x}$.}
We first prove that, as $\varepsilon\to 0$,
\begin{align}
\label{e:hA2a}
	\bar h_{n,a,x}(y)
	&=  (1+o(1))\prod_{j=1}^d(1+\hat u_a^2 \hat x_j^2)^{-1/4}
    \quad
    \text{uniformly in $y \in A_2$, in $a\le a_{\max}$, and in $x\neq 0$}
    .
\end{align}
When $y \in A_2$, the ratio $y/u_a = y|x|_a/\hat u_a$ is bounded away
from zero uniformly in $a\le a_{\max}$ and $x \neq 0$.
The estimate \eqref{e:hA2a} then follows from
\eqref{e:bar_h_new} and the fact (see \cite[p.~83]{Olve97}) that
\begin{equation}
	\sqrt{2\pi n t} \bar I_0(nt) = 1+o(1)
    \text{ uniformly in $t$ bounded away from zero.}
\end{equation}
Next, we claim that there is a $C >0$ such that
\begin{align}
\label{e:hub}
	\bar h_{n,a,x}(y) &\le  Cy^{\q-1-d/2}
    \quad
    \text{ uniformly in $y> 0$, in $a\le a_{\max}$, and in $x\neq 0$}.
\end{align}
To obtain \eqref{e:hub}, we use \eqref{e:bar_h_new} and
the fact that
$\bar I_0(t) \leq O(t^{-1/2})$ which can also be seen from \cite[p.~83]{Olve97}.
Finally,
we use \eqref{e:gbarprimeprime} and \eqref{e:hA2a} to see, after some algebra, that
as   $\varepsilon\to 0$ we have
\begin{align}
\label{e:hgratio}
    \frac{\bar h_{n,a,x}(1)}{\sqrt{\bar g_{a,x}''(1)}}
    &
    =
    \frac{1+o(1)}
    {(|x|_a \bar g_{a,\hat x}''(1)
    \prod_{j=1}^d (1+ \hat u_a^2 \hat x_j^2)^{1/2})^{1/2}}
    = \frac{\kappa_a(\hat x)}{ \sqrt{|x|_a \hat u_a}}(1+o(1))
    ,
\end{align}
with
\begin{equation}
\label{e:kappadef}
    \kappa_a(\hat x)
    =
    \Big( \sum_{j=1}^d \hat x_j^2 \prod_{i\neq j} (1+\hat u_a^2 \hat x_i^2)^{1/2} \Big)^{-1/2}
    ,
\end{equation}
and where the $o(1)$ term goes to zero as
$\varepsilon\to 0$
uniformly in $a\le a_{\max}$ and $x \neq 0$.

\medskip
\begin{proof}[Proof of Theorem~\ref{thm:green-asy}(i)]
Recall from \eqref{e:CqOZII} that
\begin{align}
\label{e:Jsufficient}
     C^{(\q)}_{a}(nx)
     &= (1+\delta_n)
     \Big(\frac{n}{u_a}\Big)^{\q-d/2}
     \alpha_\q
     \int_0^\infty \bar h_{n,a,x}(y)
     e^{-n\bar g_{a,x}(y)}\D y
     .
\end{align}
With $c_{0,\q} = \sqrt{2\pi}\,\alpha_\q$
as in \eqref{e:calim} (recall \eqref{e:alphaqdef}), we define
\begin{equation}
\label{e:caqx}
    c_{a,\q,\hat x}
    =
    c_{0,\q}
    \kappa_a(\hat x)
    \Big(\frac{\hat u_a}{m_a}\Big)^{(d-1-2q)/2}
    .
\end{equation}
By definition,
$c_{a,\q,\hat x}$ depends on $x$ only via its direction $\hat x$.  Also, as stated in \eqref{e:calim},
$\lim_{a\to 0}c_{a,\q,\hat x} = c_{0,\q}$ due to \eqref{e:asymptotics_for_u},
\eqref{e:kappadef}, the relation $m_a \sim \sqrt{2d}\, a$, and the fact
that $\|\hat x\|_2 = \|x\|_2/|x|_a \to 1$ by Proposition~\ref{prop:norm}.
Our goal is to prove that \eqref{e:mr} holds,
which by \eqref{e:Jsufficient} will follow if we prove that,
uniformly in $x \neq 0$ and in $a\le a_{\max}$, as $an \to \infty$ we have
\begin{align}
         \alpha_\q
     \int_0^\infty \bar h_{n,a,x}(y)
     e^{-n\bar g_{a,x}(y)}\D y
     & \sim
     \Big(\frac{u_a}{n} \Big)^{\q-d/2}
    c_{a,\q,\hat x}
     \frac{m_a^{(d-1-2\q)/2}}{(n|x|_{a})^{(d+1-2\q)/2} }
    \;
    e^{-m_an|x|_a}
    \nnb
    & =
    c_{0,\q}
    \frac{1}{\sqrt{n |x|_a \hat u_a }}\kappa_a(\hat x)
    e^{-m_an|x|_a}
\label{e:J2want}
\end{align}
(the equality holds by definition---recall that $u_a = \hat u_a/ |x|_a$).

We set $\varepsilon_n =(an)^{-1/4}$,
which does obey  $\varepsilon_n  \to 0$ as we imposed below
\eqref{e:asymptotics_for_u}, and we
divide the interval of integration in \eqref{e:J2want} into three subintervals:
\begin{equation}
    A_1 = [0,1-\varepsilon_n],
    \qquad
    A_2= [1-\varepsilon_n,1+\varepsilon_n]
    \qquad
    A_3 = [1-\varepsilon_n, \infty).
\end{equation}
Then we set
\begin{equation}
    J_i =
    \alpha_\q
    \int_{A_i} \bar h_{n,a,x}(y) e^{-n\bar g_{a,x}(y)} dy \qquad (i=1,2,3).
\end{equation}
We will prove that $J_2$ gives the main contribution
to \eqref{e:J2want}, with $J_1$ and $J_3$ relatively small.

\medskip\noindent
\emph{The integral $J_2$.}
By \eqref{e:gTay}, \eqref{e:gA2_bis}, and \eqref{e:hA2a},
\begin{align}
	J_2
    &= (1+o(1))
    \alpha_\q
    \bar h_{n,a,x}(1)
    e^{-n m_a|x|_a}
    \int_{-\varepsilon_n}^{\varepsilon_n}\exp\Big(-\frac{n}{2} \bar g_{a,x}''(1)(1+o(1))y^2 \Big)\D y,
\end{align}
with the $o(1)$
(as $\varepsilon_n\to 0$)
uniform in $y$, in $a\le a_{\max}$, and in $x \neq 0$.
We make the change of variables $v=y(n\bar g_{a,x}''(1))^{1/2}$ and obtain,
with $M_n=\varepsilon_n(n\bar g_{a,x}''(1))^{1/2}$,
\begin{align}
	J_2
    &= (1+o(1))
    \alpha_\q
    \frac{\bar h_{n,a,x}(1)}{\sqrt{n \bar g_{a,x}''(1)}}
    e^{-n m_a|x|_a}
    \int_{-M_n}^{M_n}
    \exp\Big(-\frac{1}{2}  (1+o(1))v^2 \Big)\D v
    .
\end{align}
By our choice of $\varepsilon_n$, and by
the fact that $\bar g_{a,x}''(1)= |x|_a \bar g_{a,\hat x}''(1) \asymp a|x|_a$ (as noted above
\eqref{e:gA3}), there exists a $c>0$ such that
\begin{equation}
	M_n \geq c\varepsilon_n (n|x|_aa)^{1/2}
    \ge c(na)^{1/4} \to \infty.
\end{equation}
Since $\alpha_\q (2\pi)^{1/2} = c_{0,\q}$, this gives
\begin{equation}
    J_2
    =
    (1+o(1))
    c_{0,\q}
    \frac{\bar h_{n,a,x}(1)}{\sqrt{n \bar g_{a,x}''(1)}}  e^{-nm_a|x|_a}.
\end{equation}
To obtain the desired right-hand side of \eqref{e:J2want}, we
replace the ratio in the above using \eqref{e:hgratio}.

It remains to show that the contributions from
the integrals $J_1$ and $J_3$ are relatively small.

\medskip \noindent \emph{The integral  $J_1$.}
To show that $J_1$ is relatively small compared to $J_2$, it suffices to prove that as  $an \to \infty$
\begin{equation}
\label{e:J1sufficient}
    \int_0^{1-\varepsilon_n} \bar h_{n,a,x}(y) e^{-n\bar g_{a,x}(y)} \D y
    \le \frac{o(1)}{ \sqrt{na|x|_a}} e^{-n\bar g_{a,x}(1)}
\end{equation}
uniformly in $x\neq 0$.
Let $\alpha = \q-1-d/2$.
By the upper bound
$\bar h_{n,a,x}(y) \leq Cy^{\alpha}$ of \eqref{e:hub}, the above integral is at most
\begin{equation}
    C   \int_0^{1-\varepsilon_n} y^{\alpha} e^{-n\bar g_{a,x}(y)} \D y.
\end{equation}
If $\alpha > -1$ then we simply bound the exponential by its maximum value
to obtain an upper bound proportional to
$\exp[-n\bar g_{a,x}(1-\varepsilon_n)]$.  By \eqref{e:gA3}, this gives an
upper bound
(with $\gamma >0$)
\begin{equation}
    e^{-nm_a|x|_a} e^{-n \gamma a\varepsilon_n^2 |x|_a}
    = e^{-nm_a|x|_a}  e^{-\gamma (na)^{1/2}|x|_a}
    = o((na|x|_a)^{-1/2}) e^{-nm_a|x|_a} ,
\end{equation}
which is sufficient.

If instead $\alpha \le -1$, then we apply Lemma~\ref{lem:ga} to bound
$y^{\alpha} e^{-n\bar g_{a,x}(y)}$ by its maximum which is attained
on $[\frac 12, 1]$ and hence is at most
$2^{|\alpha|}\exp[-n\bar g_{a,x}(1-\varepsilon_n)]$, and this is again sufficient
to obtain \eqref{e:J1sufficient}.  This proves that $J_1$ is negligible compared to $J_2$.

\medskip \noindent \emph{The integral  $J_3$.}
We bound $\bar h(y)$ by $y^{\alpha}$ with $\alpha = \q-1-d/2$.
If $\alpha <-1$, so that $y^\alpha$ is integrable,
 then we simply
 extract additional
exponential decay (compared to $J_2$) using \eqref{e:gA3} again.
Then we integrate $y^\alpha$ over $[1,\infty]$ and obtain an upper bound of the form
\begin{equation}
\label{e:J3bd}
	Ce^{-\gamma (na)^{1/2}|x|_a} = o\Big( \frac{1}{\sqrt{na|x|_a}}\Big),
\end{equation}
which is sufficient for the case $\alpha<-1$.

If instead $\alpha \ge -1$ then we use integration by parts to reduce the power.
For example, if $\alpha \in (-1,0]$ then, with $t=1+\varepsilon_n$
and $f(y)=n\bar g_{a,x}(y)$,
we use the facts that
by \eqref{e:gpA3}
$f'(t) \ge  c(na)^{3/4}|x|_a$
which eventually exceeds $1$, that
$f'$ is positive and increasing on $[t,\infty)$,
and that $\lim_{y\to\infty} y^\alpha e^{-f(y)}=0$
by \eqref{e:g_at_infty}, to obtain
\begin{align}
    \int_{t}^{\infty} y^{\alpha}e^{-f(y)}\D y
    &\leq \frac{1}{f'(t)}\int_{t}^{\infty} y^{\alpha}f'(y)e^{-f(y)}\D y
    \leq t^\alpha e^{-f(t)}+ |\alpha|  \int_t^\infty y^{\alpha-1}e^{-f(y)} \D y   .
\end{align}
The term $t^\alpha e^{-f(t)}$ is bounded above by a multiple of
$e^{-f(1+\varepsilon_n)}$, which is bounded as in \eqref{e:J3bd}.
This process can be iterated to reduce the power of $y$ to below $-1$,
which we have seen to be sufficient.

\medskip \noindent
This completes the proof.
\end{proof}

\section{Continuum regime: Proof of Theorem~\ref{thm:green-asy}(ii)}
\label{sec:contlim}

In this section, we prove Theorem~\ref{thm:green-asy}(ii). The method of proof is different from the proof of Theorem~\ref{thm:green-asy}(i) and relies
instead on a dominated convergence argument which applies simultaneously
for both $\scale >0$ and $\scale = 0$.

We define the \emph{heat kernel} (for the normalised Laplacian $\frac {1}{2d}\Delta_{\R^d}$)
\begin{equation}
\label{e:ptx}
    p_t(x) =
    \Big(\frac{d}{2\pi t}\Big)^{d/2} e^{-d\|x\|_2^2 /2t}
    \qquad
    (x \in \R^d, \; t >0).
\end{equation}
Recall the definitions of $G_s(x)$ and $G_0(x)$ in \eqref{e:Gaqx-2d} and \eqref{e:G0qx}. The following representations of the continuum Green function will be useful.
For the case $a=0$ (with $d>2\q$), we  observe that the change of variables
$s=d\|x\|_2^2/2t$ leads to
\begin{align}
\label{e:G0int}
    &
    \frac{1}{\Gamma(\q)}
    \int_0^\infty \D t \, t^{\q-1}
    p_t(x)
     =
     \frac{d^\q \Gamma(\frac{d-2\q}{2})}{2^{\q}\pi^{d/2}\Gamma(\q)}
    \frac{1}{\|x\|_2^{d-2\q}} = G_0^{(\q)}(x).
\end{align}
For $a>0$, we use
\begin{align}
\label{e:ptK}
    \frac{1}{\Gamma(\q)}
    \int_0^\infty \D t \, t^{\q-1} e^{-ta^2}
    p_t(x)
     &   =
    \Big(\frac d2 \Big)^{d/2} \frac{1}{\Gamma(\q)}
    \frac{a^{d-2\q}}{2^d \pi^{d/2}}
    \int_0^\infty \D s \frac{1}{s^{(d-2\q)/2}}
    e^{-s - d(a\|x\|_2)^2/2s}
    =
    G_a(x),
\end{align}
where we applied the formula
\begin{equation}
\label{e:Kint}
    K_\alpha(z) = \frac 12 \Big(\frac z2\Big)^\alpha \int_0^\infty \frac{1}{t^{\alpha+1}}e^{-t-z^2/4t}\D t
    \qquad
    (\alpha \in \R,\; z>0)
\end{equation}
from \cite[8.432.6]{GR07} for the last equality.

\begin{proof}[Proof of Theorem~\ref{thm:green-asy}(ii)]
Fix $x \neq 0$; without loss of generality we may assume that $x_j \ge 0$ for all $j$.
We are interested in $a = \scale/n$ with some fixed $\scale\ge 0$.
We rewrite \eqref{e:Besselcont} as
\begin{align}
    n^{d-2q}  C_{{s}/n}^{(\q)}(nx)
    &=
    \frac{1}{\Gamma(\q)} \int_0^\infty \D t\, t^{\q-1}
    e^{-\scale^2 t} f_n(t)
\label{e:cl1}
\end{align}
where we define (with $x$-dependence suppressed in the notation)
\begin{equation}
\label{e:fnu}
    f_n(t) = n^{d}
    \prod_{j=1}^{d}\bar I_{n x_j} (n^2t/d)
    .
\end{equation}
Our goal is to prove that \eqref{e:cl1} has limit $G_{s}^{(\q)}(x)$ as $n\to\infty$.

We prove below in Lemmas~\ref{lem:flim}--\ref{lem:fbd} that
\begin{equation}
\label{e:flim}
    \lim_{n\to\infty} f_n(t) = p_t(x),
\end{equation}
and that there are positive constants $C,\delta$ (depending on $x,d$) such that
\begin{equation}
\label{e:fbd0}
    f_n(t) \le
    C
    \Big(
    \1_{t \le 1}
    +
    t^{-d/2}
    \1_{t > 1}
    \Big)
    \quad \text{uniformly in $n \ge 0$ and $t \ge 0$.}
\end{equation}
Once \eqref{e:flim}--\eqref{e:fbd0} are proved, since the upper bound in \eqref{e:fbd0}
is integrable after insertion in the integral on the right-hand side of \eqref{e:cl1}
(assuming $d>2\q$ if $s=0$),
the dominated convergence theorem
can be applied.
For $s>0$, this gives
\begin{align}
    \lim_{n \to\infty}
    \frac{1}{\Gamma(\q)} \int_0^\infty \D t\, t^{\q-1}
    e^{-ts^2} f_n(t)
    &=
    \frac{1}{\Gamma(\q)} \int_0^\infty \D t\, t^{\q-1}  e^{-ts^2}
    p_t(x),
\end{align}
and we have seen in \eqref{e:ptK} that the right-hand side is equal to $G_{s}^{(\q)}(x)$.
Similarly, if $s=0$ and $d>2\q$, we instead obtain
\begin{align}
    \lim_{n\to\infty}
    \frac{1}{\Gamma(\q)} \int_0^\infty \D t\, t^{\q-1}
    f_n(t)
    &=
    \frac{1}{\Gamma(\q)} \int_0^\infty \D t\, t^{\q-1}
    p_t(x)  ,
\end{align}
which is the integral identified as $G_0(x)$ in \eqref{e:G0int}.
This completes the proof.
\end{proof}

It remains to prove \eqref{e:flim}--\eqref{e:fbd0}.  We do this in  Lemmas~\ref{lem:flim}--\ref{lem:fbd}, whose proofs rely on
the asymptotic form of the modified Bessel function from Lemma~\ref{lem:Inunu2}.

\begin{lemma}
\label{lem:flim}
For $d\ge 1$, for $x \in \Z^d$ with $x_j \ge 0$, and for $t>0$,
\begin{equation}
\label{e:flim-lem}
    \lim_{n\to\infty} f_{n}(t) = p_t(x)  .
\end{equation}
\end{lemma}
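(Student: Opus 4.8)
The plan is to substitute the pointwise large-order/large-argument asymptotics of the modified Bessel function from Lemma~\ref{lem:Inunu2} into the product \eqref{e:fnu} defining $f_n(t)$, one factor at a time, distinguishing the nonzero components of $x$ from the zero ones. Write $J_+ = \{j : x_j > 0\}$ and $J_0 = \{j : x_j = 0\}$, so that
\[
    f_n(t) = n^d \Big(\prod_{j \in J_0} \bar I_0(n^2 t/d)\Big)\Big(\prod_{j \in J_+} \bar I_{nx_j}(n^2 t/d)\Big).
\]
Since $t>0$ is fixed, every factor has argument $n^2 t/d \to \infty$, and in the second product the order $nx_j$ also tends to infinity, so the relevant regimes of Lemma~\ref{lem:Inunu2} (respectively of the $z\to\infty$ asymptotic of $\bar I_0$) apply.

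For $j \in J_+$ I would set $\nu = nx_j \to \infty$. The key point is that the argument can be written as $n^2 t/d = \nu^2 s$ with $s = t/(d x_j^2)$ \emph{independent of $n$}, so \eqref{e:Inunu} applies and yields, as $n \to \infty$,
\[
    \bar I_{nx_j}(n^2 t/d) \sim \frac{e^{-1/(2s)}}{nx_j\,(2\pi s)^{1/2}}
    = \frac{1}{n}\Big(\frac{d}{2\pi t}\Big)^{1/2} e^{-d x_j^2/(2t)},
\]
using $1/(2s) = d x_j^2/(2t)$ and $(2\pi s)^{1/2} = \sqrt{2\pi t}/(x_j \sqrt d)$. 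For $j \in J_0$, equation \eqref{e:Inunu} cannot be invoked because its order $nx_j$ would be $0$; instead I would use the large-argument asymptotic $\bar I_0(z) \sim (2\pi z)^{-1/2}$ (recalled in the proof of Lemma~\ref{lem:CBessel}) with $z = n^2 t/d$, which gives
\[
    \bar I_0(n^2 t/d) \sim \Big(\frac{2\pi n^2 t}{d}\Big)^{-1/2} = \frac{1}{n}\Big(\frac{d}{2\pi t}\Big)^{1/2} = \frac{1}{n}\Big(\frac{d}{2\pi t}\Big)^{1/2} e^{-d x_j^2/(2t)},
\]
the last equality being trivial since $x_j = 0$. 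Thus each of the $d$ factors obeys the same asymptotic $\tfrac{1}{n}(d/2\pi t)^{1/2} e^{-d x_j^2/(2t)}$, uniformly in whether $x_j$ vanishes.

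Finally I would multiply the $d$ asymptotics: the $d$ factors of $n^{-1}$ combine with the prefactor $n^d$ to give $1$, the powers of $(d/2\pi t)^{1/2}$ combine to $(d/2\pi t)^{d/2}$, and the exponentials combine to $e^{-d\sum_j x_j^2/(2t)} = e^{-d\|x\|_2^2/(2t)}$, whence
\[
    f_n(t) \longrightarrow \Big(\frac{d}{2\pi t}\Big)^{d/2} e^{-d\|x\|_2^2/(2t)} = p_t(x),
\]
which is \eqref{e:flim-lem}. There is essentially no obstacle here: the only subtlety worth flagging is that \eqref{e:Inunu} is stated only for $\nu\to\infty$, so the components with $x_j=0$ must be handled via the elementary large-argument asymptotic of $\bar I_0$; once that split is made, the proof is a direct substitution and a collection of constants.
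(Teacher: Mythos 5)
Your argument is correct and coincides with the paper's own proof: the same split into components with $x_j>0$ (where \eqref{e:Inunu} is applied with $\nu=nx_j$ and $s=t/(dx_j^2)$) and components with $x_j=0$ (where the elementary asymptotic $\bar I_0(z)\sim(2\pi z)^{-1/2}$ is used), followed by multiplication of the factors. No changes needed.
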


\begin{proof}
Recall the definition of $f_n(t)$ in \eqref{e:fnu}.
When $x_j >0$, it follows from \eqref{e:Inunu}
that
\begin{align}
\label{e:Lasy}
     \bar I_{n x_j} (n^2t/d)
     = \bar I_{n x_j}((n x_j)^2 t/ d x_j^2)
     &\sim
     \frac{1}{n} \Big(\frac{d}{2\pi t}\Big)^{1/2}e^{-dx^2_j/2t}
     .
\end{align}
If $x_j =0$ then, since
$\bar I_0(z)\sim (2\pi z)^{-1/2}$  as $z \to \infty$,
\begin{align}
\label{e:I0asy}
    \bar I_0 (n^2t/d)
    &\sim \frac{1}{n} \Big(\frac{d}{2\pi t}\Big)^{1/2}
    ,
\end{align}
which is the same formula as \eqref{e:Lasy} but with $x_j$ set equal to zero.
Substitution of \eqref{e:Lasy}--\eqref{e:I0asy} into \eqref{e:fnu},
together with the definition of $p_t(x)$ in \eqref{e:ptx}, then gives
\begin{equation}
    \lim_{n \to\infty} f_n(t) = p_t(x)
\end{equation}
and the proof is complete.
\end{proof}

\begin{lemma}
\label{lem:fbd}
Let $d \ge 1$ and
let $x \in \Z^d$ be nonzero with $x_1 \ge x_2 \ge \cdots \ge x_d \ge 0$.
There
are constants $C,\delta,n_0 >0$ (depending only on $d$)
such that
\begin{equation}
\label{e:fbd}
    f_n(t) \le
    C
    \Big(
    \1_{t \le 1}
    +
    t^{-d/2}
    \1_{t > 1}
    \Big)
    \quad \text{uniformly in $n \ge n_0$ and $t \ge 0$.}
\end{equation}
\end{lemma}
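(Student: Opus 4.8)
\textbf{Proof proposal for Lemma~\ref{lem:fbd}.}
The plan is to derive the bound on $f_n(t)=n^d\prod_{j=1}^d\bar I_{nx_j}(n^2t/d)$ directly from the two asymptotic regimes of the modified Bessel function recorded in Lemma~\ref{lem:Inunu2}, treating $t>1$ and $t\le1$ separately. Write $r$ for the number of nonzero components, so that under the standing ordering $x_1\ge\cdots\ge x_r\ge1>x_{r+1}=\cdots=x_d=0$, and take $n_0$ to be a suitable constant depending only on $d$ (so that $n_0\ge2$ and $nx_j\ge\nu_0$ for every $j\le r$ whenever $n\ge n_0$, with $\nu_0$ from Lemma~\ref{lem:Inunu2}).

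First I would record the crude universal bound $\bar I_\nu(z)\le\bar I_0(z)\le C_0\min(1,z^{-1/2})$ for all $\nu\ge0$ and $z>0$ --- the first inequality from the integral representation \eqref{e:Ibardef}, the second from $\bar I_0\le1$ together with $\bar I_0(z)\sim(2\pi z)^{-1/2}$, as already used for \eqref{e:hub}. Applying this to all $d$ factors gives $f_n(t)\le C_0^d d^{d/2}\,t^{-d/2}$ for every $n$ and $t$, which is already the claimed bound on $t>1$. The remaining task is to show $f_n(t)\le C$, with $C$ depending only on $d$, for $0<t\le1$ and $n\ge n_0$; here the crude bound fails near $t=0$ and must be supplemented by the refined estimate \eqref{e:Lclaim}.

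For $t\le1$ I would split into the subcases $t<d/n^2$ and $d/n^2\le t\le1$. For each $j\le r$, with $\nu_j=nx_j$ and $s_j=t/(dx_j^2)$, the estimate \eqref{e:Lclaim} bounds $\bar I_{nx_j}(n^2t/d)=\bar I_{\nu_j}(\nu_j^2 s_j)$ by $Ce^{-\delta nx_j}$ when $2nt/(dx_j)<1$ and by a multiple of $n^{-1}t^{-1/2}e^{-\delta dx_j^2/t}$ otherwise (using $\nu_j^{-1}s_j^{-1/2}=d^{1/2}n^{-1}t^{-1/2}$ and $1/s_j=dx_j^2/t$), while for $j>r$ I would keep the crude bound $\bar I_0(n^2t/d)\le C_0\min(1,d^{1/2}n^{-1}t^{-1/2})$. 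In the subcase $t<d/n^2$ the inequality $dx_j/(2n)\ge d/n^2>t$ (valid since $x_jn\ge2$) forces every nonzero factor into the first regime, whence $f_n(t)\le n^d(Ce^{-\delta n})^r$, which is bounded since $r\ge1$ and $n\mapsto n^d e^{-\delta n}$ is bounded. In the subcase $d/n^2\le t\le1$ I would partition $\{1,\dots,r\}$ into the set $S$ of indices in the large-argument regime and its complement, multiply the bounds, and collect powers: the factors $n^{-1}$ contributed by $S$ and by the $d-r$ zero components reduce the leading $n^d$ to $n^{r-|S|}$, which is controlled by the $e^{-\delta n}$ factors from the small-regime nonzero indices; the residual power $t^{-m/2}$ with $m\le d$ is absorbed by $e^{-\delta d/t}$ when $S\neq\emptyset$ (using $\sum_{j\in S}x_j^2\ge1$), and when $S=\emptyset$ it is absorbed using $t\ge d/n^2$ together once more with the exponential-in-$n$ decay. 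All constants so produced depend only on $d$ and on $C,\delta,\nu_0$ from Lemma~\ref{lem:Inunu2}.

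The main obstacle --- and the reason for the subcase split --- is the region of very small $t$, of order $n^{-2}$ or below, where a naive estimate leaves an uncontrolled $n^d$ or $t^{-d/2}$. The point is to exploit that there, since $x\neq0$, at least one Bessel factor of positive order lies in its exponentially small regime $\bar I_\nu(\nu^2 s)\le Ce^{-\delta\nu}$, and that both $n^d e^{-\delta n}$ and $t^{-d/2}e^{-\delta d/t}$ are bounded functions; the rest is bookkeeping of constants together with the elementary facts $x_j\ge1$ for $j\le r$ and $\sum_{j\le r}x_j^2=\|x\|_2^2\ge1$.
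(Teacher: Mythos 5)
Your proof is correct, and it uses the same two ingredients as the paper---the crude bound $\bar I_\nu(z)\le \bar I_0(z)\le C\min(1,z^{-1/2})$ and the tail estimate \eqref{e:Lclaim}---but organises them differently. The paper first invokes the monotonicity of $I_\nu$ in its order (Cochran) to bound every factor with $j\ge 2$ by $\bar I_0(n^2t/d)$ \emph{and} to replace $\bar I_{nx_1}$ by $\bar I_{n}$ (using $x_1\ge 1$); it then applies \eqref{e:Lclaim} exactly once, to the single distinguished factor $n\bar I_n(n^2t/d)$ with $\nu=n$, $s=t/d$, and the bound \eqref{e:fbd} drops out with threshold $t=d/2n$ and no case analysis over indices. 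You instead apply \eqref{e:Lclaim} to each nonzero-order factor with its own parameters $\nu_j=nx_j$, $s_j=t/(dx_j^2)$, and control the product by partitioning the indices according to which regime of \eqref{e:Lclaim} they fall in. What your route buys is that you only need the weak inequality $|\bar I_\nu|\le\bar I_0$, which follows directly from the integral representation \eqref{e:Ibardef}, rather than the full order-monotonicity $I_{\alpha'}\le I_\alpha$ for $\alpha'>\alpha\ge 0$ needed to write $\bar I_{nx_1}\le\bar I_n$; the cost is the combinatorial bookkeeping over the set $S$ and the extra subcase $t<d/n^2$. Your verifications are sound: in particular the observation that $t<d/n^2$ together with $nx_j\ge 2$ forces every nonzero factor into the regime $2\nu_js_j<1$, and that $n^{r-|S|}$ is always matched by $r-|S|$ factors of $e^{-\delta n}$, are exactly the points that need checking, and the resulting constants depend only on $d$ and on the constants of Lemma~\ref{lem:Inunu2}, as required.
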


\begin{proof}
We use $C$ to denote a constant that may depend on $d$ and may
change value from line to line.
By hypothesis, $x_1 \ge 1$.
Since $I_{\alpha'}(z) < I_\alpha(z)$ for any $z \ge 0$ and any $\alpha' > \alpha \ge 0$
\cite{Coch67}, we can bound each factor with $j \ge 2$ in \eqref{e:fnu} above by
$\bar I_{0}(n^2t/d)$ to obtain
\begin{equation}
    f_n(t) \le n^{d-1}\Big(\bar I_{0}(n^2t/d)\Big)^{d-1} n \bar I_{n}(n^2t/d).
\end{equation}
Since $\bar I_0(z)\sim (2\pi z)^{-1/2}$ as $z \to \infty$,
and since $\bar I_0(z) \le 1$ for all $z \ge 0$, we see that
\begin{equation}
\label{e:fgbd}
    f_n(t) \le
    C \min (n^{d-1}, t^{-(d-1)/2}) \;
    n \bar I_{n} (n^2t/d  ).
\end{equation}
By \eqref{e:Lclaim}, there exist $C,\delta,n_0 >0$ such that
\begin{align}
\label{e:fgbd2}
    n\bar I_{n}(n^2 t/d)
    &\le C
    \Big(
    n e^{ - \delta n} \1_{ t < d/2n}
    +
    t^{-1/2} e^{- \delta/t}
    \1_{t \geq d/2n} \Big)
    \qquad
    (n \ge n_0,\; t>0).
\end{align}
We insert \eqref{e:fgbd2} into \eqref{e:fgbd}, using $n^{d-1}$ for the first
term and $t^{-(d-1)/2}$ for the second, and obtain
\begin{equation}
	f_n(t) \le C \Big(
    n^d e^{ - \delta n} \1_{t < d/2n}
    +
    t^{-d/2} e^{- \delta/t}
    \Big)
    \qquad
    (n \ge n_0,\; t>0).
\end{equation}
The second term on the right-hand side is bounded for $t \le 1$ and is less than
$t^{-d/2}$ for $t >1$.  Also, $n^d e^{ - \delta n}$ is bounded
as a function of $n$, and $\1_{t<d/2n} \le \1_{t \le 1}$ once
$n \ge d/2$
so
the first term is bounded by a multiple of $\1_{t \le 1}$.
This completes the proof.
\end{proof}

\section{Properties of the norm:  Proof of Proposition~\ref{prop:norm}}
\label{sec:norm}

In this section, we prove Proposition~\ref{prop:norm}.  We assume throughout that $d \ge 1$
and $a>0$.
Recall the definition
\begin{equation}
\label{e:normdef5}
    |x|_a = \frac{1}{m_a} \sum_{i=1}^d x_i \arcsinh(x_iu)
    \qquad (x \neq 0).
\end{equation}
Proposition~\ref{prop:norm} asserts that
$|\cdot|_a$ defines a norm on $\R^d$ which is monotone increasing
in $a$ and for all $x\in\R^d$ obeys
\begin{equation}
|x|_a=\|x\|_2(1+O(a^2)),
\qquad \lim_{a\to \infty}|x|_a  = \|x\|_1,
\end{equation}
with the error term in the first equality uniform in nonzero $x$ as $a \to 0$.
From this, we conclude immediately that $\|x\|_2 \le |x|_a \le \|x\|_1$.

For the limit $a \to 0$,
it follows from the relation $u_a(x)  = \frac{\sqrt{2d}a}{\|x\|_2}(1+O(a^2))$
from \eqref{e:asymptotics_for_u}, together
with $m_a=\sqrt{2d}a(1+O(a^2))$ from \eqref{e:maasy}
and the definition \eqref{e:normdef5},
that
\begin{align}
	|x|_a
	&= \frac{1}{m_a}\sum_{i=1}^d x_i^2\frac{\sqrt{2d}a}{\|x\|_2}
    \big(1+O(a^2)+O(a^2 x_i^2\|x\|_2^{-2})\big)
    = \|x\|_2(1+O(a^2)).
\label{e:a2}
\end{align}
To see that $\lim_{a\to\infty}|x|_a = \|x\|_1$, we first observe that $m_a = \arccosh(1+da^2)\sim \log a^2$ as $a\to\infty$.  Also, it follows from \eqref{e:udef1} that
$u=u_a(x) \sim \|x\|_1^{-1}da^2$ as $a \to \infty$, and therefore
\begin{equation}
    |x|_a
    \sim   \frac{1}{\log a^2}  \sum_{i=1}^d |x_i| \log a^2 = \|x\|_1.
\end{equation}
Thus, to complete the proof of Proposition~\ref{prop:norm}, it suffices to prove
that $|\cdot|_a$ defines a norm on $\R^d$, and that $|x|_a$ is monotone increasing in $a$
for each fixed $x$.  We prove these two items in Lemmas~\ref{lem:normpf}--\ref{lem:normincreasing}.
To lighten the notation, we will write $C_a(x)$ instead of $C_a^{(1)}(x)$.

The following elementary lemma is the basis for our proof of the triangle inequality for $|\cdot|_a$.

\begin{lemma}
\label{lem:Cxy}
For $d \ge 1$, for $x,y\in \Z^d$ and for $a > 0$ (also for $a=0$ if $d >2$),
\begin{equation}
\label{e:CCCC}
    C_a(0)C_a(x) \ge C_a(y) C_a(x-y).
\end{equation}
\end{lemma}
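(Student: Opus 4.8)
The plan is to exploit the random-walk interpretation of $C_a=C_a^{(1)}$ recorded around \eqref{e:Gkappadef}. Set $z=\frac{1}{1+a^2}\in(0,1]$ (so $z<1$ when $a>0$ and $z=1$ when $a=0$), let $(S_n)_{n\ge 0}$ denote simple random walk on $\Z^d$ started at $0$, and let $N$ be an independent killing time, geometric with $\P(N\ge n)=z^n$ (for $a=0$ take $N=\infty$, which is where transience $d>2$ enters). Then $(1+a^2)C_a(x)=\sum_{n\ge 0}z^n p_n(x)=\E_0\sum_{n=0}^N\1_{S_n=x}=:G(x)$ is the expected number of visits of the killed walk to $x$. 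Since $G(x)=(1+a^2)C_a(x)$, the claimed inequality \eqref{e:CCCC} is equivalent to $G(0)G(x)\ge G(y)G(x-y)$.

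First I would establish the renewal identity $G(x)=F(x)\,G(0)$, where $F(x)=\P_0(\tau_x\le N)$ is the probability that the killed walk ever visits $x$, with $\tau_x=\inf\{n:S_n=x\}$ and $F(0)=1$. On $\{\tau_x\le N\}$, the strong Markov property at $\tau_x$ together with the memorylessness of the geometric law of $N$ (so that $N-\tau_x$ is again geometric with the same parameter and independent of the walk after $\tau_x$) shows that the number of visits to $x$ at times $\ge\tau_x$ has the same law as the total number of visits to $0$ by a fresh killed walk from $0$; on $\{\tau_x>N\}$ there are no visits. Taking expectations gives $G(x)=\P_0(\tau_x\le N)\,G(0)=F(x)G(0)$. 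Substituting this identity reduces \eqref{e:CCCC} to
\[
  F(x)\ \ge\ F(y)\,F(x-y)\qquad (x,y\in\Z^d).
\]

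To prove this last inequality I would use the ``reach $x$ by first passing through $y$'' strategy. The event that the killed walk reaches $y$, and then from $y$ subsequently reaches $x$ before being killed, is contained in $\{\tau_x\le N\}$. By the strong Markov property at $\tau_y$, the memorylessness of $N$, and translation invariance (the chance of reaching $x$ from $y$ equals the chance of reaching $x-y$ from $0$), this event has probability exactly $F(y)F(x-y)$. Hence $F(x)\ge F(y)F(x-y)$, which completes the proof; the degenerate cases $x=0$, $y=0$, or $y=x$ are trivial and consistent with this. The $a=0$, $d>2$ case is handled verbatim with $N=\infty$, $F(x)=\P_0(\tau_x<\infty)$, and transience replacing killing to guarantee finiteness.

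The only point requiring care is making the renewal identity $G(x)=F(x)G(0)$ and the memoryless decomposition of $N$ at a hitting time precise; once the killed-walk formalism is set up this is a routine application of the strong Markov property, and I do not anticipate any genuine obstacle.
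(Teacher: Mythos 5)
Your proposal is correct and follows essentially the same route as the paper: both convert $C_a$ to the Green function of the geometrically killed walk and combine a renewal factorization with a ``pass through $y$'' path inequality. The only cosmetic difference is that you decompose at the \emph{first} visit to the intermediate point (first-passage probabilities $F$, handled via the strong Markov property and memorylessness), while the paper decomposes at the \emph{last} visit (the generating function $Q_\kappa$ of walks not revisiting the origin); by time reversal these are the same quantity, $F(x)=Q_\kappa(x)=P_\kappa(x)/P_\kappa(0)$.
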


\begin{proof}
Let $p_n(x)$ be the $n$-step transition probability for simple random walk (without killing)
to travel from
$0$ to $x$ in $n$ steps, and
let $P_\kappa(x) = \sum_{n=0}^\infty (1-\kappa)^{n} p_n(x)$.
We have seen below \eqref{e:Gkappadef} that $(1+a^2)C_a(x)=P_\kappa(x)$ with
$\kappa = \frac{a^2}{1+a^2}$,
so it suffices to prove \eqref{e:CCCC} instead for $P_\kappa$.

Let $q_n(x)$ be the $n$-step transition probability for simple random walk to travel from
$0$ to $x$ in $n$ steps without revisiting $0$, and let
$Q_\kappa(x) = \sum_{n=0}^\infty (1-\kappa)^{n} q_n(x)$.
By considering only walks from $0$
to $x$ which pass through a fixed $y \in \Z^d$ and visit $y$ for the last time at
the $m^{\rm th}$ step, we obtain
\begin{equation}
    p_n(x)      \ge \sum_{m=0}^n p_m(y)q_{n-m}(x-y).
\end{equation}
This inequality gives
\begin{equation}
    P_\kappa(x) \ge P_\kappa(y)Q_\kappa(x-y).
\end{equation}
Also, with $m$ the time of the last return to $0$,
\begin{equation}
    p_n(x) = \sum_{m=0}^n p_m(0)q_{n-m}(x),
\end{equation}
and by replacing $x$ with $x-y$ we similarly obtain
\begin{equation}
    P_\kappa(x-y) = P_\kappa(0)Q_\kappa(x-y).
\end{equation}
Therefore,
\begin{equation}
    P_\kappa(0)P_\kappa(x) \ge P_\kappa(y)P_\kappa(x-y),
\end{equation}
and the proof is complete.
\end{proof}

\begin{lemma}
\label{lem:normpf}
For $d \ge 1$ and $a>0$, $|\cdot|_a$ is a norm on $\R^d$.
\end{lemma}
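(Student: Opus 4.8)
The plan is to check the three defining properties of a norm: absolute homogeneity, positive definiteness, and the triangle inequality. The first two fall straight out of Definition~\ref{def:norm}. For homogeneity, observe that for $\lambda\neq 0$ the defining relation \eqref{e:udef1} for $u_a(\lambda x)$ is obtained from the one for $u_a(x)$ by replacing $u$ with $|\lambda|u$, so $u_a(\lambda x)=u_a(x)/|\lambda|$; substituting this into \eqref{e:normdef1} and using that $\arcsinh$ is odd gives $x_i\arcsinh(x_iu_a(\lambda x)\lambda)=|x_i|\arcsinh(|x_i|u_a(x))\,\mathrm{sgn}(\lambda)\lambda$, hence $|\lambda x|_a=|\lambda|\,|x|_a$ (the case $\lambda=0$ holding by the convention $|0|_a=0$). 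For positive definiteness, when $x\neq 0$ the left side of \eqref{e:udef1} is strictly increasing from $1$ to $\infty$, so $u_a(x)>0$, and then every summand in \eqref{e:normdef1} equals $|x_i|\arcsinh(|x_i|u_a(x))\ge 0$ and the sum vanishes only if all $x_i=0$; thus $|x|_a>0$ for $x\neq 0$.

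The substance is the triangle inequality, which I would first prove on $\Z^d$. The Bessel representation \eqref{e:Besselrep0} (or the random walk series) shows $C_a(x)>0$ for every $x$, so logarithms are legitimate. Fix nonzero $x,y\in\Z^d$ with $x+y\neq 0$ (the remaining cases being trivial), and apply Lemma~\ref{lem:Cxy} with the substitutions $x\mapsto n(x+y)$, $y\mapsto nx$ to get $C_a(0)\,C_a(n(x+y))\ge C_a(nx)\,C_a(ny)$ for all $n\ge 1$. Taking $-\frac1n\log$ of both sides and letting $n\to\infty$, the constant prefactors and the powers of $n$ in the asymptotics \eqref{e:mr} of Theorem~\ref{thm:green-asy}(i) (applied with $\q=1$ and fixed $a_n=a$) contribute only $O(n^{-1}\log n)$, so we are left with $m_a|x+y|_a\le m_a(|x|_a+|y|_a)$; dividing by $m_a>0$ gives the triangle inequality on $\Z^d$.

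To pass to $\R^d$: homogeneity with scalars $M$ and $1/M$ for $M\in\mathbb{N}$ upgrades the inequality from $\Z^d$ to $\Q^d$ by clearing denominators, after which a density argument suffices once we know $x\mapsto|x|_a$ is continuous. Continuity on $\R^d\setminus\{0\}$ follows from the implicit function theorem applied to \eqref{e:udef1}, whose derivative in $u$ is strictly positive there, so $u_a(\cdot)$ and hence $|\cdot|_a$ is smooth away from the origin; and $|x|_a\to 0$ as $x\to 0$ because \eqref{e:udef1} forces $|x_iu_a(x)|\le d(1+a^2)$, whence $|x_i\arcsinh(x_iu_a(x))|\le |x_i|\arcsinh(d(1+a^2))\to 0$. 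Approximating arbitrary $x,y\in\R^d$ by rationals and using continuity then yields $|x+y|_a\le|x|_a+|y|_a$ in general. The only genuine ingredient beyond bookkeeping is the identification of the exponential decay rate of $C_a(nx)$ along the ray through $x$ as $m_a|x|_a$; it is this fact, supplied by Theorem~\ref{thm:green-asy}(i) — whose proof does not invoke Lemma~\ref{lem:normpf} — that converts the submultiplicative estimate of Lemma~\ref{lem:Cxy} into subadditivity of $|\cdot|_a$, so the main obstacle is simply to deploy that rate correctly.
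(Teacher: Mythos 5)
Your proposal is correct and follows essentially the same route as the paper: homogeneity and positive definiteness read off from Definition~\ref{def:norm}, the triangle inequality on $\Z^d$ obtained by combining the random-walk inequality of Lemma~\ref{lem:Cxy} with the identification $-\lim_{n\to\infty}\frac1n\log C_a(nx)=m_a|x|_a$ from \eqref{e:mr}, and then extension to $\R^d$ by homogeneity, density, and continuity. The only cosmetic difference is that the paper passes from $\Z^d$ to $\R^d$ via dyadic approximations $\lfloor 2^n x\rfloor/2^n$ rather than via $\Q^d$, which changes nothing of substance.
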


\begin{proof}
By its definition in \eqref{e:normdef5}, $|\cdot|_a$ is non-negative and homogeneous
(recall that $u_a(\lambda x) = |\lambda|^{-1}u_a(x)$),
with $|x|_a=0$ if and only if $x=0$.  It remains only to prove the triangle inequality.

To prove the triangle inequality first for points in $\Z^d$, we conclude
from Lemma~\ref{lem:Cxy} that
\begin{equation}
    C_a(0)C_a(nx) \ge C_a(ny)C_a(nx-ny)
    \qquad
    (x,y\in\Z^d).
\end{equation}
The asymptotic formula \eqref{e:mr}
(whose proof did not use the triangle inequality we are now proving)
implies that
\begin{equation}
    - \lim_{n \to \infty} \frac 1n \log C_a(nx) = m_a |x|_a.
\end{equation}
From this, we obtain
\begin{equation}
    m_a|x|_a \le m_a|y|_a + m_a |x-y|_a,
\end{equation}
and hence the triangle inequality does hold when the norm is evaluated at points in $\Z^d$.

For $x,y\in \R^d$, we write $\floor{x} = (\floor{x_1},\cdots, \floor{x_d})$. The triangle inequality holds for $\floor{2^nx},\floor{2^ny}$, for all $n\in \N$. By homogeneity, it also holds for $\frac{\floor{2^nx}}{2^n},\frac{\floor{2^ny}}{2^n}$. Since $x \mapsto u(x)$ is a continuous function on $\R^d\setminus\{0\}$, $x\mapsto|x|_a$ is a continuous function on $\R^d$ which is extended continuously at $0$ by $|0|_a = 0$. Thus by letting $n\to\infty$ we obtain the triangle inequality for all $x,y\in\R^d$.
\end{proof}

\begin{lemma}
\label{lem:normincreasing}
For $d \ge 1$, $a>0$, and $x\in \R^d$,
the norm $|x|_a$ is a monotone increasing function of $a$.
\end{lemma}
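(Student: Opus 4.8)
The plan is to fix a nonzero $x$ (when $x=0$ the norm vanishes for every $a$, so there is nothing to prove) and, without loss of generality, to assume $x_1,\dots,x_d\ge 0$, since both \eqref{e:normdef5} and the equation \eqref{e:udef1} defining $u=u_a(x)$ depend on $x$ only through $(|x_1|,\dots,|x_d|)$. Write $N(u)=\sum_{i=1}^d x_i\arcsinh(x_iu)$, so that $|x|_a=N(u_a(x))/m_a$, and note that eliminating $a$ between \eqref{e:m0def} and \eqref{e:udef1} gives $\cosh m_a=\sum_{i=1}^d\sqrt{1+x_i^2u^2}-(d-1)$. Both $a\mapsto m_a$ and $a\mapsto u_a(x)$ are smooth and strictly increasing on $(0,\infty)$ (the latter by the implicit function theorem applied to \eqref{e:udef1}), and differentiating \eqref{e:m0def} and \eqref{e:udef1} in $a$ yields $\sinh m_a\,\frac{dm_a}{da}=2da$ and $\frac{u}{d}N'(u)\,\frac{du}{da}=2a$, where $N'(u)=\sum_{i=1}^d x_i^2(1+x_i^2u^2)^{-1/2}>0$. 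Substituting these two relations into the quotient rule for $\frac{d}{da}\big(N(u_a(x))/m_a\big)$ shows that $\frac{d}{da}|x|_a$ has the same sign as $\frac{m_a}{u}-\frac{N(u)}{\sinh m_a}$, so monotonicity of $a\mapsto|x|_a$ reduces to the single scalar inequality
\begin{equation}
    m_a\sinh m_a \;\ge\; u\,N(u) \;=\; \sum_{i=1}^d (x_iu)\,\arcsinh(x_iu),
\end{equation}
valid under the constraint $\cosh m_a=\sum_{i=1}^d\sqrt{1+x_i^2u^2}-(d-1)$.

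To prove this, set $s_i=x_iu\ge 0$ and $\theta_i=\arcsinh s_i$, so that $\sqrt{1+s_i^2}=\cosh\theta_i$, the constraint becomes $\cosh m_a=1+\sum_{i=1}^d(\cosh\theta_i-1)$, and the inequality to be shown reads $m_a\sinh m_a\ge\sum_{i=1}^d\theta_i\sinh\theta_i$. Introduce $H\colon[0,\infty)\to[0,\infty)$, $H(\Phi)=\arccosh(1+\Phi)\sqrt{\Phi^2+2\Phi}$, and observe the identities $H(\cosh\theta-1)=\theta\sinh\theta$ (immediate, since $\arccosh(\cosh\theta)=\theta$ and $\Phi^2+2\Phi=\sinh^2\theta$ when $\Phi=\cosh\theta-1$) and $H\big(\sum_{i=1}^d(\cosh\theta_i-1)\big)=H(\cosh m_a-1)=m_a\sinh m_a$. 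Thus the inequality follows once $H$ is shown to be superadditive on $[0,\infty)$, and since $H(0)=0$ it suffices to show that $\Phi\mapsto H(\Phi)/\Phi$ is nondecreasing on $(0,\infty)$: a star-shaped function vanishing at the origin is superadditive, because $\sum_i H(\Phi_i)=\sum_i\Phi_i\,\frac{H(\Phi_i)}{\Phi_i}\le\big(\sum_i\Phi_i\big)\frac{H(\Sigma)}{\Sigma}=H(\Sigma)$ with $\Sigma=\sum_i\Phi_i\ge\Phi_i$. Parametrising $\Phi=\cosh\theta-1$ with $\theta\ge 0$ and using the half-angle identities $\cosh\theta-1=2\sinh^2(\theta/2)$ and $\sinh\theta=2\sinh(\theta/2)\cosh(\theta/2)$, one gets $H(\Phi)/\Phi=\theta\sinh\theta/(\cosh\theta-1)=\theta\coth(\theta/2)=:R(\theta)$; since $\theta=\arccosh(1+\Phi)$ is increasing in $\Phi$, it remains only to check $R'(\theta)\ge 0$. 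A direct computation gives $R'(\theta)=\coth(\theta/2)-\tfrac{\theta}{2}\csch^2(\theta/2)$, which, after multiplication by $\sinh^2(\theta/2)>0$, has the sign of $\sinh(\theta/2)\cosh(\theta/2)-\tfrac{\theta}{2}=\tfrac12(\sinh\theta-\theta)\ge 0$. This completes the plan.

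The heart of the argument is thus the elementary bound $\sinh\theta\ge\theta$, which drives the star-shapedness of $H$; everything preceding it is algebra and implicit differentiation. I expect the main (and only mildly delicate) obstacle to be organisational: carrying out the implicit differentiation carefully enough to see that $\frac{d}{da}|x|_a$ collapses to the single comparison $m_a\sinh m_a\ge u\,N(u)$, after which the problem is genuinely one-dimensional. An equivalent and perhaps more transparent route is to change variables from $a$ to $u=u_a(x)$---a strictly increasing bijection of $(0,\infty)$ onto itself by \eqref{e:udef1}---and then prove directly that $u\mapsto N(u)/m(u)$ is nondecreasing, where $\cosh m(u)=\sum_{i=1}^d\sqrt{1+x_i^2u^2}-(d-1)$; this leads to exactly the same scalar inequality and the same superadditivity argument.
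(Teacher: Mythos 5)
Your proposal is correct, and while the reduction step coincides with the paper's, the way you dispatch the resulting scalar inequality is genuinely different. Like the paper, you differentiate $|x|_a=N(u_a(x))/m_a$ implicitly in $a$ and find that the sign of $\frac{d}{da}|x|_a$ is governed by the single inequality $m_a\sinh m_a\ge u\,N(u)$, equivalently $|x|_a\,u_a(x)\le\sinh m_a$. The paper proves this by a global maximisation of $U(x)=|x|_a u_a(x)$ over directions: a critical-point analysis showing that any critical direction has all nonzero coordinates equal, followed by an explicit evaluation at $v_k=\sum_{i\le k}e_i$ and a monotonicity argument for $\psi(x)=\sqrt{\tfrac{x+1}{x-1}}\arccosh x$ based on $\arccosh x\le\sqrt{x^2-1}$. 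You instead exploit the additive structure of the constraint: after the substitution $\theta_i=\arcsinh(x_iu)$, the defining relations become $\cosh m_a-1=\sum_i(\cosh\theta_i-1)$ and the target inequality becomes superadditivity of $H(\Phi)=\arccosh(1+\Phi)\sqrt{\Phi^2+2\Phi}$, which you deduce from star-shapedness of $H$, which in turn reduces (via $H(\Phi)/\Phi=\theta\coth(\theta/2)$) to $\sinh\theta\ge\theta$. I checked the identities $H(\cosh\theta-1)=\theta\sinh\theta$ and the star-shaped-implies-superadditive step, and they are correct; the equality case $x=e_j$ (one nonzero $\theta_i$, forcing $\theta_1=m_a$) is also consistent. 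Your route is shorter and more elementary, avoiding any optimisation over directions; the paper's route buys the additional structural information that $U$ is maximised exactly along the coordinate axes and is monotone along the family $v_k$, which is of independent geometric interest but not needed for the lemma.
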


\begin{proof}
We fix $x\in \R^d$ and prove that the function $f(a)=|x|_a$
is increasing in $a$.  It is convenient to introduce the notation
\begin{equation}
    \sigma_i = \sigma_i(x) = x_i^2 (1+x_i^2u^2)^{-1/2},
    \qquad
    \|\sigma\|_1 = \sum_{i=1}^d \sigma_i.
\end{equation}
Implicit differentiation of $\cosh m_a=1+da^2$ with respect to $a$ gives
\begin{equation}
    m_a'(a) = \frac{2da}{\sinh m_a},
\end{equation}
and differentiation of  \eqref{e:udef1}   leads to
\begin{equation}
    u_a'  = \frac{2da}{u  \|\sigma\|_1}.
\end{equation}
Therefore, by \eqref{e:normdef5},
\begin{align}
    f'(a) &=
    - \frac{m_a'}{m_a^2}\sum_{i=1}^d x_i \arcsinh(x_iu)
    +
    \frac{u'}{m_a} \|\sigma\|_1
\label{e:derivative_a_norm}
    =
    \frac{2da}{um_a }\Big(1-\frac{|x|_a u }{\sinh m_a }\Big).
\end{align}

Let
\begin{equation}
    U(x) = |x|_a u_a(x) \quad (x\in \R^d \setminus \{0\}),
\end{equation}
and note that when $x=e_i$
is a unit vector, it follows from \eqref{e:ue}--\eqref{e:norm1} that
\begin{equation}
    U(e_i) = u_a(e_i) = \sinh m_a.
\end{equation}
Thus it suffices to show that $U$ is maximal at $e_i$, as this implies $f'(a) \ge 0$.
Since $U(\lambda x)=U(x)$ for all $\lambda >0$, and since
$U$ is continuously differentiable and bounded  on $\R^d\setminus \{0\}$,
the maximum exists and
will be attained along lines through the origin.  There will be several lines
since $U(x)$ is invariant under permutation or sign changes of the coordinates of $x$, so we may restrict attention to nonzero $x$ with $x_1 \geq \cdots \geq x_d$.

We first argue that any critical point $x^*$ of $U$ must have all its nonzero coordinates
equal.  A critical point of $U$ obeys
\begin{align}
\label{e:xcrit}
\frac{\partial U}{\partial x_i}(x^*) &= \frac{u\arcsinh(x^*_iu)}{m_a}+|x^*|_a\frac{\partial u}{\partial x_i}(x^*)  = 0 \qquad (i=1,\ldots,d).
\end{align}
Differentiation of \eqref{e:udef1} with respect to $x_i$ gives
\begin{align}
    x_i \frac{\partial u}{\partial x_i} &=
    -u\frac{\sigma_i}{\|\sigma\|_1}
.
\end{align}
Thus, with $\sigma_i^*=\sigma_i(x^*)$ and $u^*=u_a(x^*)$, \eqref{e:xcrit} can be rephrased as
\begin{align}
\label{e:crit_point_U_norm}
\frac{\sigma^*_i}{\|\sigma^*\|_1}
&= \frac{x^*_i \arcsinh u^* x^*_i}{m_a |x^*|_a}
\qquad (i=1,\ldots,d).
\end{align}
Let $k\ge 1$ denote the largest subscript $i$ such that $x^*_i>0$.
From \eqref{e:crit_point_U_norm}, we see that
\begin{align}
\label{e:crit_point_U_norm_bis}
	\frac{\sigma_i^*u^{*2}}{u^*x^*_i\arcsinh u^* x^*_i}
    &= \frac{\sigma_j^*u^{*2}}{u^*x^*_j\arcsinh u^* x^*_j}
    \qquad ( i,j \le k ).
\end{align}
An elementary calculation shows that the
function
\begin{equation}
    t \mapsto \frac{t}{\sqrt{1+t^2} \arcsinh t}
\end{equation}
is a bijection from $\R^+$ onto $[0,1]$.  Thus \eqref{e:crit_point_U_norm_bis} implies the equality $u^*x^*_i = u^*x^*_j$, so indeed all nonzero coordinates of any critical vector $x^*$ must be equal.

Let $v_k = \sum_{i=1}^k e_i$, for $k=1,\ldots,d$.  It remains only to determine
which value of $k$ maximises $U(v_k)$.
The explicit values of $u_a(v_k)$ and $|v_k|_a$ can be computed from \eqref{e:udef1}--\eqref{e:normdef5}, with the result that
\begin{align}
    u_a(v_k) & = \sqrt{(1+da^2/k)^2-1} = \sinh(\arccosh(1+da^2/k)),
    \\
    |v_k| &= \frac{1}{m_a}k\arcsinh u_a(v_k) = \frac{1}{m_a}k \arccosh(1+da^2/k).
\end{align}
From this, we find that
\begin{align}
    U(v_k) & = \frac{da^2}{m_a} \psi(1+da^2/k),
    \qquad
    \psi(x) = \Big(\frac{x+1}{x-1} \Big)^{1/2} \arccosh x.
\end{align}
A computation gives
\begin{align}
	\psi'(x) &= \frac{1}{x-1}\Big(1-\frac{\arccosh x}{\sqrt{x^2-1}}\Big)
    \ge 0,
\end{align}
with the inequality due to the fact that
 $\arccosh x \leq \sqrt{x^2-1}$ for all $x\ge 1$.
Therefore $U(v_k)$ is decreasing in $k$ and the maximum of $U(v_k)$ is attained at $v_{1}=e_1$.  We have noted previously that this suffices, so the proof is
complete.
\end{proof}

\appendix
\section{Continuum Green function}
\label{sec:continuumGreen}

Let $\Delta_\R$ denote the Laplace operator for functions on $\R^d$,
normalised by $\frac{1}{2d}$, for dimensions $d \ge 1$.
Let $\scale\ge 0$ and $\q>0$.
The Green function
(or \emph{fundamental solution})
of the operator $-\Delta_\R +\scale^2$ corresponds to the $\q=1$ case of
the integral
\begin{equation}
\label{e:Gxq}
    G_\scale^{(\q)}(x)
    = \int_{\R^d} \frac{e^{ik\cdot x}}{(\frac{1}{2d}\|k\|_2^2+\scale^2)^{\q}} \frac{\D k}{(2\pi)^d}
    \qquad
    (x \in \R^d \setminus \{0\}).
\end{equation}
However this integral requires some interpretation, as it is not absolutely convergent
for large $k$ unless $d<2\q$, and for $\scale=0$ it is not convergent at $k=0$ unless
$d>2\q$.  The interpretation is in terms of tempered distributions in the
next proposition.
For the case $\scale=0$, see \cite[Theorem~2.4.6]{Graf14} for an extension
without the restriction that $d>2\q$.

\begin{prop}
\label{prop:G}
Let $d \ge 1$, $\q >0$, and $\scale \ge 0$ (with the restriction $d>2\q$ if $\scale =0$).
In the sense of tempered distributions,
the Fourier transform of $G_\scale^{(\q)}(x)$ defined by \eqref{e:Gscaling}--\eqref{e:G0qx}
is
\begin{equation}
\label{e:Ghatq}
    \hat G_\scale^{(\q)}(k) = \frac{1}{(\frac{1}{2d}\|k\|_2^2+\scale^2)^\q}.
\end{equation}
\end{prop}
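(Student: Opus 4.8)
The plan is to verify that the claimed formulas \eqref{e:Gaqx-2d}--\eqref{e:G0qx} define a tempered distribution whose Fourier transform is the right-hand side of \eqref{e:Ghatq}, by reducing everything to the heat-kernel (subordination) representation already used in Section~\ref{sec:contlim}. First I would treat the massive case $\scale>0$. Starting from the identity $\frac{1}{v^\q}=\frac{1}{\Gamma(\q)}\int_0^\infty t^{\q-1}e^{-tv}\,\D t$ applied to $v=\frac{1}{2d}\|k\|_2^2+\scale^2$, one writes
\begin{equation}
\frac{1}{(\frac{1}{2d}\|k\|_2^2+\scale^2)^\q}
=\frac{1}{\Gamma(\q)}\int_0^\infty t^{\q-1}e^{-t\scale^2}e^{-t\|k\|_2^2/2d}\,\D t,
\end{equation}
and recognises $e^{-t\|k\|_2^2/2d}$ as the Fourier transform of the heat kernel $p_t$ from \eqref{e:ptx}. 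Since $p_t$ is a Schwartz function and the $t$-integral converges absolutely for $\scale>0$ (the $t\to 0$ end is controlled by $t^{\q-1}$, the $t\to\infty$ end by $e^{-t\scale^2}$), Fubini and the inversion theorem on $\Scal(\R^d)$ give that the tempered distribution with Fourier transform $(\frac{1}{2d}\|k\|_2^2+\scale^2)^{-\q}$ is the function $\frac{1}{\Gamma(\q)}\int_0^\infty t^{\q-1}e^{-t\scale^2}p_t(x)\,\D t$; this is exactly the integral evaluated in \eqref{e:ptK} to be $G_\scale^{(\q)}(x)$, via the Bessel-function formula \eqref{e:Kint}. The scaling relation \eqref{e:Gscaling} is then just a change of variables and is consistent.

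Next I would handle the massless case $\scale=0$ with $d>2\q$. Here the same subordination identity gives the candidate distribution $\frac{1}{\Gamma(\q)}\int_0^\infty t^{\q-1}p_t(x)\,\D t$, which by the substitution $s=d\|x\|_2^2/2t$ was computed in \eqref{e:G0int} to equal $\frac{d^\q\Gamma(\frac{d-2\q}{2})}{2^\q\pi^{d/2}\Gamma(\q)}\|x\|_2^{-(d-2\q)}=G_0^{(\q)}(x)$, and this is locally integrable precisely when $d>2\q$. To identify its Fourier transform as $(\frac{1}{2d}\|k\|_2^2)^{-\q}$ in the distributional sense, I would pair against a test function $\varphi\in\Scal(\R^d)$ and interchange the $t$-integral with the pairing: $\langle G_0^{(\q)},\hat\varphi\rangle=\frac{1}{\Gamma(\q)}\int_0^\infty t^{\q-1}\langle p_t,\hat\varphi\rangle\,\D t=\frac{1}{\Gamma(\q)}\int_0^\infty t^{\q-1}\int_{\R^d}e^{-t\|k\|_2^2/2d}\varphi(k)\,\frac{\D k}{(2\pi)^d}\cdot(2\pi)^d\,\D t$ — more cleanly, $\langle \hat G_0^{(\q)},\varphi\rangle=\langle G_0^{(\q)},\hat\varphi\rangle$, and one uses $\widehat{p_t}(k)=e^{-t\|k\|_2^2/2d}$ together with Tonelli (everything is nonnegative once $\varphi\ge 0$, then split a general $\varphi$) to get $\int_{\R^d}\varphi(k)\big(\frac{1}{\Gamma(\q)}\int_0^\infty t^{\q-1}e^{-t\|k\|_2^2/2d}\,\D t\big)\frac{\D k}{(2\pi)^d}=\int_{\R^d}\varphi(k)(\frac{1}{2d}\|k\|_2^2)^{-\q}\frac{\D k}{(2\pi)^d}$, since the inner $t$-integral reproduces $(\frac{1}{2d}\|k\|_2^2)^{-\q}$ for $k\neq 0$. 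The restriction $d>2\q$ is what makes $(\frac{1}{2d}\|k\|_2^2)^{-\q}$ locally integrable near $k=0$, so the right-hand side is itself a tempered distribution and the interchange is legitimate.

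The main obstacle — really the only subtle point — is the justification of the Fubini/Tonelli interchange in the massless case near $k=0$ and $t=\infty$ simultaneously, where neither factor alone decays: the saving grace is the hypothesis $d>2\q$, which guarantees $\int_0^\infty t^{\q-1}e^{-t\|k\|_2^2/2d}\,\D t<\infty$ for each $k\neq 0$ and that the resulting function of $k$ is integrable against any Schwartz $\varphi$. I would dispatch this cleanly by first proving the statement for nonnegative test functions via Tonelli (no integrability hypothesis needed), then extending to general $\varphi$ by linearity after noting both sides are continuous linear functionals on $\Scal(\R^d)$. The massive case needs no such care since the $e^{-t\scale^2}$ factor makes the double integral absolutely convergent outright. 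Finally I would remark that the $\q=1$, $\scale>0$ instance recovers the classical Yukawa/Bessel-potential formula and the $\q=1$, $\scale=0$ instance the Newtonian potential, and cite \cite[Theorem~2.4.6]{Graf14} for the removal of the $d>2\q$ restriction in the massless case.
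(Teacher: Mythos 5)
Your proposal is correct and follows essentially the same route as the paper: the subordination identity $v^{-\q}=\Gamma(\q)^{-1}\int_0^\infty t^{\q-1}e^{-tv}\,\D t$, the heat kernel with $\hat p_t(k)=e^{-t\|k\|_2^2/2d}$, Parseval/inversion on Schwartz space, and the evaluations \eqref{e:G0int}--\eqref{e:ptK}, with the interchange of integrals justified by integrability of $(\frac{1}{2d}\|k\|_2^2+\scale^2)^{-\q}\hat\varphi(k)$ (using $d>2\q$ when $\scale=0$). The only cosmetic differences are that the paper handles $\scale>0$ and $\scale=0$ in a single pass, and instead of reducing to nonnegative test functions (a slightly awkward step, since the positive part of a Schwartz function is not Schwartz) it simply checks absolute convergence of the double integral --- near $k=0$ on the Fourier side and near $x=0$ where $G_\scale^{(\q)}(x)\asymp\|x\|_2^{-(d-2\q)}$ is locally integrable --- and applies Fubini directly.
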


For $f \in L^1(\R^d)$, we define
\begin{equation}
    \hat f(k) = \int_{\R^d} f(x) e^{ik\cdot x} \D x
    \qquad
    (k \in \R^d).
\end{equation}
In particular, $\hat p_t(k) = e^{-t\|k\|_2^2/2d}$, where $p_t$ is the heat kernel
defined in \eqref{e:ptx}.

\begin{proof}[Proof of Proposition~\ref{prop:G}]
Let $d \ge 1$, $\q>0$ and $\scale \ge 0$, with the additional assumption that $d>2\q$ if
$\scale=0$.
Let $\varphi$ be a Schwartz-class test function on $\R^d$.
In the sense of tempered distributions, the statement that
the Fourier transform is given by \eqref{e:Ghatq}
is the
statement that
\begin{equation}
\label{e:FTdistribution}
    \int_{\R^d}
    \frac{1}{(\frac{1}{2d}\|k\|_2^2+\scale^2)^\q}
    \hat\varphi(k)   \frac{\D k}{(2\pi)^d}
    =
    \int_{\R^d} G_\scale^{(\q)}(x) \varphi(x) \D x.
\end{equation}

To prove \eqref{e:FTdistribution}, we use \eqref{e:reciprocal} and Fubini's Theorem to obtain
\begin{align}
    \int_{\R^d}
    \frac{1}{(\frac{1}{2d}\|k\|_2^2+\scale^2)^\q}
    \hat\varphi(k)  \frac{\D k}{(2\pi)^d}
    & =
    \frac{1}{\Gamma(\q)}
    \int_0^\infty \D t \, t^{\q-1} e^{-t \scale^2 }
    \int_{\R^d}  \frac{\D k}{(2\pi)^d}
    e^{-t\|k\|_2^2/2d}  \hat\varphi(k) .
\label{e:Gpf1}
\end{align}
Fubini's Theorem indeed applies since the integral on the right-hand side is absolutely
convergent, because
the $t$-integral is bounded uniformly in $k$ when $\scale>0$,
and is $O(\|k\|_2^{-2\q})$ when $\scale=0$
so there is no divergence at $k=0$ when $d>2\q$ (of course there is no divergence as
$k\to \infty$ because $\hat\varphi$ is a Schwartz function).
By Parseval's relation, the last integral in \eqref{e:Gpf1} is equal to
\begin{equation}
    \int_{\R^d}  \frac{\D k}{(2\pi)^d}
    \hat{p}_t(k) \hat\varphi(k)
    =
    \int_{\R^d}  p_t(x)\varphi(x) \D x.
\end{equation}
A second application of Fubini's Theorem (justified below) then gives
\begin{align}
\label{e:Fubini}
    \int_{\R^d}
    \frac{1}{(\frac{1}{2d}\|k\|_2^2+\scale^2)^\q}
    \hat\varphi(k)  \frac{\D k}{(2\pi)^d}
    & =
    \int_{\R^d}
    \D x \, \varphi(x )
    \frac{1}{\Gamma(\q)}
    \int_0^\infty \D t \, t^{\q-1} e^{-t \scale^2 } p_t(x)
    \nnb & =
     \int_{\R^d}
    \D x \, \varphi(x )
    G_\scale(x),
\end{align}
where we used \eqref{e:G0int}--\eqref{e:ptK} for the last equality.
To justify the application of Fubini's Theorem in \eqref{e:Fubini},
it suffices to prove that the integral on its first right-hand side  is
absolutely convergent.  We have just shown that this integral has
integrand $\varphi(x) G_\scale^{(q)}(x)$.  There is no issue for large $x$, since $G_\scale(x)$ decays
as $x \to \infty$ and $\varphi$ is a Schwartz function.  As $x \to 0$, $G_\scale^{(q)}(x)$ is
asymptotically a multiple of $\|x\|_2^{d-2\q}$ for all $\scale \ge 0$
(recall \eqref{e:Kasy} for asymptotics of $K_{(d-2\q)/2}$ when $\scale>0$), and this is integrable.
This completes the proof.
\end{proof}

\section{Bessel function with large order and large argument}
\label{sec:Besselpf}

We now prove Lemma~\ref{lem:Inunu2}, which we restate here for convenience
as Lemma~\ref{lem:Inunu2-bis}.

\begin{lemma}
\label{lem:Inunu2-bis}
As $\nu \to \infty$,
\begin{equation}
\label{e:ILasy-bis}
	\bar I_\nu(\nu t) = L_\nu (t)(1+o(1))
\end{equation}
where the $o(1)$ is uniform in $t>0$.
Also, as $\nu \to \infty$, for any $s >0$,
\begin{align}
\label{e:Inunu-bis}
    \bar I_\nu(\nu^2 s) &\sim \frac{e^{-1/2s}}{\nu (2\pi s)^{1/2}},
\end{align}
with an error that is not uniform in $s$.
Finally, there exist $C,\delta, \nu_0 >0$ such that
\begin{align}
\label{e:Lclaim-bis}
    \bar I_{\nu}(\nu^2 s)
    &\le C
    \Big(
    e^{ - \delta\nu} \1_{2\nu s < 1}
    +
    \nu^{-1} s^{-1/2} e^{- \delta/s} \1_{2\nu s \ge 1}
    \Big)
    \qquad
    (\nu \ge \nu_0,\; s>0).
\end{align}
\end{lemma}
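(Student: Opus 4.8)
The plan is to derive all three statements from the classical uniform (Debye-type) asymptotic expansion of the modified Bessel function, which I would quote from \cite{Olve97}: for $t>0$,
\[
    I_\nu(\nu t) = \frac{1}{\sqrt{2\pi\nu}}\,\frac{e^{\nu\eta(t)}}{(1+t^2)^{1/4}}\,\bigl(1+o(1)\bigr),
    \qquad
    \eta(t)=\sqrt{1+t^2}+\log\frac{t}{1+\sqrt{1+t^2}},
\]
where this is the first term of the asymptotic series (all higher terms carry extra powers of $1/\nu$) and the $o(1)$ is uniform in $t\in(0,\infty)$. Since $\psi(t)=-t+\eta(t)$ by the definition \eqref{e:L_psi_def}, multiplying the display by $e^{-\nu t}$ turns its right-hand side into exactly $L_\nu(t)$, which is \eqref{e:ILasy-bis}.

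Next I would obtain \eqref{e:Inunu-bis} by specialising \eqref{e:ILasy-bis} to $t=\nu s$ with $s>0$ fixed, so that $\bar I_\nu(\nu^2 s)=L_\nu(\nu s)(1+o(1))$, and then reading off the large-argument behaviour of $L_\nu(\nu s)$ as $\nu\to\infty$. From the alternate form $\psi(t)=-t+\sqrt{1+t^2}-\arcsinh(t^{-1})=\frac{1}{t+\sqrt{1+t^2}}-\arcsinh(t^{-1})$ one gets $\psi(t)=-\tfrac{1}{2t}+O(t^{-3})$, hence $\nu\psi(\nu s)=-\tfrac{1}{2s}+O(\nu^{-2}s^{-3})\to-\tfrac{1}{2s}$; combined with $(1+\nu^2 s^2)^{1/4}\sim(\nu s)^{1/2}$ this gives $L_\nu(\nu s)\sim\frac{e^{-1/2s}}{\nu(2\pi s)^{1/2}}$. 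The term $O(\nu^{-2}s^{-3})$ in the exponent is precisely what makes the error non-uniform in $s$.

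For the bound \eqref{e:Lclaim-bis} I would first use the uniformity in \eqref{e:ILasy-bis} to fix $\nu_0$ with $\bar I_\nu(\nu^2 s)\le 2L_\nu(\nu s)$ for all $\nu\ge\nu_0$ and $s>0$, and then estimate $L_\nu(\nu s)=(2\pi\nu)^{-1/2}(1+\nu^2 s^2)^{-1/4}e^{\nu\psi(\nu s)}$ using only that $\psi$ is strictly increasing (by \eqref{e:psi_deriv}) and strictly negative on $(0,\infty)$ with $t\psi(t)\to-\tfrac12$ as $t\to\infty$. These properties yield a constant $c_0>0$ such that $\psi(t)\le\psi(1/2)<0$ for $0<t\le 1/2$ and $t\psi(t)\le -c_0$ for $t\ge 1/2$. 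On the set $2\nu s<1$, i.e.\ $\nu s\le 1/2$, the first property with $t=\nu s$ gives $e^{\nu\psi(\nu s)}\le e^{\nu\psi(1/2)}$, and absorbing the harmless factor $(2\pi\nu)^{-1/2}(1+\nu^2 s^2)^{-1/4}\le1$ produces the $e^{-\delta\nu}$ term. On the set $2\nu s\ge1$ the second property gives $\nu\psi(\nu s)\le -c_0/s$, and $(1+\nu^2 s^2)^{1/4}\ge(\nu s)^{1/2}$ gives $L_\nu(\nu s)\le(2\pi)^{-1/2}\nu^{-1}s^{-1/2}e^{-c_0/s}$, the second term; taking $\delta=\min\{c_0,|\psi(1/2)|\}$ completes the estimate.

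The one real obstacle is \eqref{e:ILasy-bis} itself, and specifically its uniformity over the \emph{whole} half-line: the small-$t$ regime, where $I_\nu(\nu t)$ is governed by $(\nu t/2)^\nu/\Gamma(\nu+1)$, and the large-$t$ regime, where $\bar I_\nu(\nu t)\sim(2\pi\nu t)^{-1/2}$, must both be captured by the single expression $L_\nu$ (the match in the small-$t$ regime amounting to Stirling's formula for $\Gamma(\nu+1)$). This uniformity is exactly the content of Olver's theorem, so it is really a matter of quoting it in the right form; should a self-contained appendix be preferred, I would instead derive \eqref{e:ILasy-bis} by the method of steepest descent applied to a loop-integral representation $I_\nu(\nu t)=\frac{1}{2\pi i}\int e^{\nu(t\cosh w-w)}\,\mathrm{d}w$, whose saddle point lies at $w_0=\arcsinh(t^{-1})$ with critical value $\eta(t)$ and second derivative $\sqrt{1+t^2}$ there, and then verify that the resulting Gaussian estimate is uniform in $t>0$. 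Everything downstream of \eqref{e:ILasy-bis} is then routine.
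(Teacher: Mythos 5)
Your proposal is correct and follows essentially the same route as the paper: quote Olver's uniform expansion (the paper cites \cite[(7.18), p.~378]{Olve97}) to get \eqref{e:ILasy-bis}, specialise to $t=\nu s$ and use $\psi(t)=-\tfrac{1}{2t}+O(t^{-3})$ for \eqref{e:Inunu-bis}, and bound $L_\nu(\nu s)$ with the same case split $2\nu s<1$ versus $2\nu s\ge 1$ for \eqref{e:Lclaim-bis}. The only cosmetic difference is in the last step, where the paper factors $L_\nu$ into an exponential prefactor times $h_\nu(t)=(\tfrac{t}{1+\sqrt{1+t^2}})^\nu$ and bounds the pieces separately, whereas you work directly with the monotonicity of $\psi$ and the limit $t\psi(t)\to-\tfrac12$; both yield the same estimate.
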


\begin{proof}
The uniform asymptotic formula \eqref{e:ILasy-bis} is given in
\cite[(7.18), p.~378]{Olve97}.

To prove \eqref{e:Inunu-bis}, we fix $s>0$ and set $t=\nu s$ in \eqref{e:ILasy-bis}.
By the definitions of $L,\psi$ in \eqref{e:L_psi_def} we have
\begin{align}
    L_\nu(\nu s) \sim \frac{e^{\nu\psi(\nu s)}}{\nu (2\pi s)^{1/2}}
\end{align}
and
\begin{align}
    e^{\nu \psi(\nu s)} & =
    e^{-\nu^2s}
    e^{\nu\sqrt{1+\nu^2 s^2}} \Big( \frac{\nu s}{1+\sqrt{1+\nu^2 s^2}}\Big)^\nu
    \sim
    e^{-1/2 s},
\end{align}
so \eqref{e:Inunu-bis} then follows from the uniformity in \eqref{e:ILasy-bis} together with
\begin{align}
    L_\nu(\nu s) \sim \frac{e^{-1/2 s}}{\nu (2\pi s)^{1/2}}.
\end{align}

To prove \eqref{e:Lclaim-bis}, we use \eqref{e:ILasy-bis} to see that there is a $\nu_0>0$ such
that
\begin{equation}
    \bar I_\nu(\nu s) \le 2 L_\nu(\nu s) \qquad (\nu \ge \nu_0,\; s>0).
\end{equation}
By the definition of $L_\nu$
in \eqref{e:L_psi_def},
\begin{align}
\label{e:Lxi-bis}
    L_\nu(t) & = \frac{e^{-\nu t +\nu \sqrt{1+t^2}}}{(2 \pi \nu)^{1/2}(1+t^2)^{1/4} }
    h_\nu(t),
    \qquad
    h_\nu(t) =
    \Big(\frac{t}{1+\sqrt{1+t^2}} \Big)^\nu .
\end{align}
We need an estimate for \eqref{e:Lxi-bis} when $t=\nu s$.
We use
\begin{align}
    \sqrt{1+t^2} & \le t +  \min (1,(2t)^{-1}),
    \\
    (1+t^2)^{-1/4} & \le \min(1, t^{-1/2}),
\end{align}
to obtain
\begin{equation}
    L_\nu(\nu s)
    \le
    C   \nu^{-1/2} \min(1,(\nu s)^{-1/2})
    e^{\min(\nu,(2s)^{-1})}
    h_\nu(\nu s).
\end{equation}
If $2\nu s \ge 1$ then  we use (with $C$ possibly changing from line to line)
\begin{equation}
    L_\nu(\nu s)
    \le
    C   \nu^{-1} s^{-1/2}
    e^{1/(2s)} h_\nu(\nu s)
    \qquad
    (2\nu s \ge 1),
\end{equation}
while if $2\nu s <1$ we use simply
\begin{equation}
    L_\nu(\nu s)
    \le
    C
    e^{\nu} h_\nu(\nu s)
    \qquad
    (2\nu s < 1).
\end{equation}

Thus it remains to prove that there exists a $\delta>0$ such that
\begin{equation}
\label{e:hnubd}
    h_\nu(\nu s) \le
    \begin{cases}
        e^{-(1+\delta)\nu} & (2\nu s < 1)
        \\
        e^{-(\delta+ 1/2)/s}& (2\nu s \ge 1).
    \end{cases}
\end{equation}
Suppose first that $2\nu s < 1$.  The inequality  \eqref{e:hnubd} holds in this case because
\begin{equation}
    h_\nu(\nu s) \le 4^{-\nu} \qquad
    (2\nu s < 1).
\end{equation}
Finally, if $2\nu s \ge 1$ then, since the function
$\tau(y )= (\frac{y}{1+\sqrt{1+y^{2}}})^{y}$ is decreasing in $y$,
\begin{align}
     h_\nu(\nu s) &\leq \tau(1/2)^{1/s}
     =
     e^{-(\log (2+\sqrt{5}))/2s}\qquad
    (2\nu s \ge 1)
     ,
\end{align}
and this suffices for \eqref{e:hnubd} because $\log(2+\sqrt{5}) >1$.
\end{proof}

\section*{Acknowledgement}
This work was supported in part by NSERC of Canada.
We thank Yacine Aoun
for helpful comments on an earlier version of this paper,
and an anonymous referee for valuable suggestions.


\end{document}